\theoremstyle{plain}
\newcommand{\secat}{\operatorname{secat}}
\newcommand{\TC}{\operatorname{\mathsf TC}}
\newcommand{\SC}{\operatorname{\mathsf SC}}
\newcommand{\CC}{\operatorname{\mathsf CC}}
\newcommand{\tcn}{\TC_{n}}
\newcommand{\tcns}{\TC^{\Sigma}_{n}}
\newcommand{\tcs}{\TC^{\Sigma}}
\newcommand{\scs}{\SC^{\Sigma}}
\newcommand{\scn}{\SC_{n}}
\newcommand{\scns}{\SC^{\Sigma}_{n}}
\newcommand{\scnsr}{\SC^{\Sigma, r}_{n}}
\newcommand{\ccs}{\CC^{\Sigma}}
\newcommand{\ccn}{\CC_{n}}
\newcommand{\ccns}{\CC^{\Sigma}_{n}}
\newcommand{\ccnsr}{\CC^{\Sigma, r}_{n}}
\newcommand{\sd}{\operatorname{sd}}
\newcommand{\st}{\operatorname{st}}
\newcommand{\Id}{\operatorname{Id}}
\newcommand{\KK}{\mathcal{K}}
\newcommand{\XX}{\mathcal{X}}
\newcommand{\zz}{\mathbb{Z}_2}
\newcommand{\bx}{\mathbf{x}}
\newcommand{\bX}{\mathbf{X}}
\newtheorem{theorem}{Theorem}[section]
\newtheorem{prop}[theorem]{Proposition}
\newtheorem{lemma}[theorem]{Lemma}
\newtheorem{corollary}[theorem]{Corollary}
\newtheorem*{thma}{Theorem A}
\newtheorem*{thmb}{Theorem B}
\newenvironment{mysubsection}[2][]
{\begin{subsec}\begin{upshape}\begin{bfseries}{#2.}
			\end{bfseries}{#1}}
		{\end{upshape}\end{subsec}}
\theoremstyle{definition}
\newtheorem{definition}[theorem]{Definition}
\newtheorem{example}[theorem]{Example}
\newtheorem{subsec}[theorem]{}
\theoremstyle{remark}
\newtheorem{remark}[theorem]{Remark}
\begin{document}

\title[Simplicial and combinatorial versions of higher symmetric complexity] {Simplicial and combinatorial versions of higher symmetric topological complexity}

\author{Amit Kumar Paul}
\author{Debasis Sen}
	
\address{Department of Mathematics and Statistics\\
Indian Institute of Technology, Kanpur\\ Uttar Pradesh 208016\\India}

\email{kamitp@iitk.ac.in/amitkrpaul23@gmail.com}
	
\address{Department of Mathematics and Statistics\\
Indian Institute of Technology, Kanpur\\ Uttar Pradesh 208016\\India}

\email{debasis@iitk.ac.in}

\date{\today}
	
\subjclass[2010]{Primary 57Q05  ; \ Secondary: 05E45, 06A07, 68T40.}
\keywords{Topological complexity, simplicial complex, symmetric group, finite space}
	
\begin{abstract}
In this paper, we introduce higher symmetric simplicial complexity $\scns(K)$ of a simplicial complex $K$ and higher symmetric combinatorial complexity $\ccns(P)$ of a finite poset $P$. These are simplicial and combinatorial approaches to symmetric motion planning of Basabe - Gonz\'{a}lez - Rudyak - Tamaki. We prove that the symmetric simplicial complexity $\scns(K)$ is equal to symmetric topological complexity $\tcns(|K|)$ of the geometric realization of $K$ and the symmetric combinatorial complexity $\ccns(P)$ is equal to symmetric topological complexity $\tcns(|\KK(P)|)$ of the geometric realization of the order complex of $P$.
\end{abstract}
\maketitle

\begin{section}{Introduction}
		
The \emph{topological complexity} of a path connected space $X$ was introduced by Farber (cf. \cite{Far03}).   It is a homotopy invariant measure of the complexity to construct a motion-planning algorithm on the space $X$. Let $I = [0,1]$ and $X^I $ denotes the free path space. Consider the fibration,
\begin{equation}\label{p}
p: X^I \rightarrow X\times X,~~ \gamma \mapsto (\gamma(0), \gamma(1)).
\end{equation}
\noindent Then the topological complexity of $X$ is defined to be the least positive integer $k$ such that there exists an open cover $\{U_1, \cdots, U_k\}$ of $X\times X$ with continuous section of $p$ over each $U_i$ (i.e. a continuous map $s_i : U_i \rightarrow X^I$ satisfying $p \circ s_i = \Id_{U_i}$ for $i = 1, 2, \cdots, k$). It is denoted by $\TC(X)$. Generalizing the idea, Rudyak defined higher topological complexity (cf. \cite{Rud10}). He introduced \emph{n-th topological complexity } $\TC_n(X), \ n \geq 2$  such that $\TC_2(X) = \TC(X)$. We recall the definition of higher topological complexity in the next section. Farber and Grant introduced symmetric topological complexity $\TC^S(X)$ of a path connected space $X$ with the idea that the motion planning between $(p,q)$ and $(q,p)$ are inverses of each other (cf. \cite{FarG07}). They observed that $\TC^S(X)$ is not a homotopy invariant. To rectify it, Basabe, Gonz\'{a}lez, Rudyak, and Tamaki defined another version of symmetric topological complexity $\tcs(X)$ and generalized it for higher version $\tcns(X)$ (see \cite{BasGRT14}). The $\tcns(X)$ is homotopy invariant and naturally defined. In (\cite{BasGRT14}) the authors proved that this two invariants differ by at most $1$ for $n=2$.
		
A simplicial approach of topological complexity, \emph{simplicial complexity}, was introduced by Gonz\'{a}lez (cf. \cite{Gon18}).
It is proved there that for a finite simplicial complex $K$, the simplicial complexity $\SC(K)$ and the topological complexity of its geometric realization $\TC(|K|)$ are equal. Higher analogue of this result was obtained in \cite{Pau19}. In this paper we introduce \emph{symmetric simplicial complexity} $\scs(K)$ and its higher version $\scns(K)$. We show that they are equal to symmetric topological complexity of the geometric realization of $K$. This result is a symmetric version of \cite[Theorem 3.5]{Pau19}. 
\begin{thma}
For a finite simplicial complex $K$, $\scns(K) = \tcns(|K|)$, for any $n \geq 2$.
\end{thma}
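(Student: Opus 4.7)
The plan is to prove the two inequalities $\tcns(|K|) \le \scns(K)$ and $\scns(K) \le \tcns(|K|)$ separately, adapting the strategy of \cite[Theorem 3.5]{Pau19} for the higher non-symmetric case, with every construction required to be $\Sigma_n$-equivariant.

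For the easy direction $\tcns(|K|) \le \scns(K)$, I would start from a $\Sigma_n$-equivariant simplicial motion-planning cover of the appropriate simplicial model that realizes $\scns(K)$, and then pass to geometric realizations. Each simplicial open set becomes an open subset of the relevant subspace of $|K|^n$, the simplicial sections induce continuous sections of the evaluation fibration, and the $\Sigma_n$-equivariance carries over verbatim. This yields a symmetric topological motion planner of the same cardinality.

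For the harder direction $\scns(K) \le \tcns(|K|)$, I would argue via iterated barycentric subdivision. Starting with an equivariant open cover $\{U_1,\dots,U_k\}$ realizing $\tcns(|K|)$ together with equivariant continuous sections $s_i$, a Lebesgue-number argument on the compact space $|K|^n$ produces an integer $r$ such that every open star of a vertex in the $n$-fold product of $\sd^r K$ is contained in some $U_i$. I would then partition the vertices equivariantly into $k$ classes, for instance by sending a vertex $v$ to the smallest index $i$ such that $\st(v) \subset U_i$, which is $\Sigma_n$-invariant by the invariance of the cover. The union of the stars in each class forms a $\Sigma_n$-equivariant simplicial open set, and equivariant simplicial approximation — possibly after a further round of subdivision — converts the restrictions of the $s_i$ into simplicial sections. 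Combined with the invariance of $\scns$ under subdivision (established in the paper as a separate lemma paralleling the non-symmetric case), this yields $\scns(K) \le k$.

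The main obstacle I expect is ensuring that every step is carried out $\Sigma_n$-equivariantly. Barycentric subdivision and its diagonal action are canonically $\Sigma_n$-equivariant, and Lebesgue numbers exist uniformly on compacta, so those ingredients present no serious difficulty; the genuinely new combinatorial content is the equivariant bookkeeping required to guarantee that the partitioning rule above, the resulting cover pieces, and the simplicial approximants of $s_i$ are simultaneously compatible with the $\Sigma_n$-action. A secondary technical point is that simplicial approximation typically requires a second subdivision of the source; one must check that this further subdivision still keeps the refined cover $\Sigma_n$-equivariant, which follows because the simplicial approximation theorem applied to an equivariant map produces equivariant approximants when the target triangulation is equivariant.
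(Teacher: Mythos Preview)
Your two-inequality outline with a Lebesgue-number/subdivision argument has the right shape and matches the paper's, but you are misreading what $\scns$ actually is and consequently missing the main technical step. The invariant is not defined via simplicial sections of any fibration; it asks for symmetric \emph{subcomplexes} $L_i$ covering $\sd^r(K^n)$ on which the $n$ maps $\pi_j=p_j\circ\iota^r_{K^n}$ are \emph{symmetrically contiguous}: there must be a common $\pi_*$ with $\pi_*(gv)=\pi_*(v)$ and contiguity chains $\pi_*=\pi_j^0,\dots,\pi_j^c=\pi_j$ obeying $\pi_j^l(gv)=\pi_{g(j)}^l(v)$ at every stage $l$. So in the hard direction the job is not to approximate a single equivariant section $s_i$, but to upgrade the symmetric \emph{homotopy} among $p_1,\dots,p_n$ on each $U_i$ to a symmetric \emph{contiguity} among $\pi_1,\dots,\pi_n$ on a subcomplex. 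This is exactly Proposition~\ref{prop homotopy to contiguity}, and it is not a corollary of ordinary equivariant simplicial approximation: the paper slices the homotopy $H$ along $I_n$, takes approximations $\psi_1^l$ on the first leg, and then --- this is the idea you are missing --- forces the relation $\phi_j^l(gv)=\phi_{g(j)}^l(v)$ by redefining $\phi_1^l$ to be constant on orbits of the subgroup generated by elements $(1,j)\,g\,(1,g(j))$, before transporting to the other legs via transpositions. Your closing remark that ``equivariant simplicial approximation of an equivariant map yields equivariant approximants'' only handles the degenerate endpoint $\phi_*$; the intermediate maps $\phi_j^l$ are not themselves $\Sigma_n$-equivariant (they satisfy a twisted relation mixing the index $j$), so a different mechanism is required.

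Two smaller corrections. For the easy direction, the $L_i$ are subcomplexes, so $|L_i|$ are closed, not open; the paper reaches $\tcns(|K|)$ via the locally-compact-splitting characterization for ENRs (Corollary~\ref{prop homotopy}) after concatenating the realized contiguity with the straight-line homotopy from $|\pi_j|$ to the genuine projection $p_j$, rather than by producing an open cover. For the hard direction, the paper takes $L_i$ to be the subcomplex of all simplices of $\sd^r(K^n)$ whose realization lies in $U_i$ --- automatically symmetric and automatically a cover --- rather than partitioning vertices by minimal index and taking unions of open stars; your variant is not wrong in spirit, but unions of open stars are open sets, not subcomplexes, so you would still need an extra step to land in the definition of $\scnsr$.
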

\noindent (See Theorem \ref{thm sym sc sym tc equal})
		
A combinatorial analogue of topological complexity was introduced by Tanaka for a finite space (or equivalently for a finite poset, see Section \ref{finiteposetand finitespace}) (\cite{Tan18}). It is proved there that the combinatorial complexity $\text{CC}^\infty(P)$ of a finite space $P$ represent the topological complexity $\TC(|\mathcal{K}(P)|)$, where $|\mathcal{K}(P)|$ denote the realization of the order complex of $P$ (see Section \ref{simplicialcomplex}). This was generalized to higher combinatorial complexity in \cite{Pau19}. Moreover Tanaka introduced symmetric combinatorial complexities $\CC^S(P)$ and $\CC^{\Sigma}(P)$ for a finite space $P$, where the first one is an analogue of $\TC^S$ and the second one is of  $\tcs$ (cf. \cite{Tan19}). He proved that the symmetric combinatorial complexities of $P$ and symmetric topological complexities of $|\mathcal{K}(P)|$ are same: $\CC^S(P) = \TC^S(|\mathcal{K}(P)|)$ and $\CC^{\Sigma}(P) = \tcs(|\mathcal{K}(P)|)$. Here we introduce \emph{higher symmetric combinatorial complexity} $\ccns(P)$ for a finite space $P$ and generalized the second result. This can also be described as a symmetric analogue of \cite[Theorem 5.6]{Pau19}. 
\begin{thmb}
For a finite space $P$, $\ccns(P) = \scns(\KK(P)) = \tcns(|\KK(P)|)$, $n\geq2$. 
\end{thmb}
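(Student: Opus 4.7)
The plan is to derive Theorem B from Theorem A by establishing the single remaining equality $\ccns(P) = \scns(\KK(P))$, which is a symmetric refinement of \cite[Theorem 5.6]{Pau19}. The argument will proceed by proving both inequalities via the order-complex functor $\KK$ and its face-poset partner $\chi$, while carefully transporting the $\Sigma_n$-symmetry through each construction. Theorem A will then supply the second equality $\scns(\KK(P)) = \tcns(|\KK(P)|)$ for free.

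First I would establish $\scns(\KK(P)) \leq \ccns(P)$. Starting from a symmetric combinatorial motion planner realizing $\ccns(P) = k$ at some subdivision index $r$ — i.e.\ a cover of the appropriate refinement of $P^n$ into $k$ order-convex pieces admitting order-preserving sections and satisfying the prescribed $\Sigma_n$-equivariance condition — one applies $\KK$. Because $\KK$ converts order-preserving maps to simplicial maps and intertwines finite products with barycentric subdivision, the image of such a planner is a symmetric simplicial motion planner on a suitable $\sd^{r'} \KK(P)^n$ with the same number of sections; crucially, the $\Sigma_n$-action on $P^n$ is carried to the standard $\Sigma_n$-action on $\KK(P)^n$ under this functor.

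Conversely, for $\ccns(P) \leq \scns(\KK(P))$, I would start with a symmetric simplicial motion planner on $\sd^r \KK(P)$ realizing $\scns(\KK(P)) = k$ and pass to face posets. The face poset of $\sd^r \KK(P)$ coincides with the $r$-fold poset-theoretic subdivision of $P$, so the simplicial sections descend to order-preserving sections on the corresponding combinatorial refinement of $P^n$. This yields a symmetric combinatorial motion planner with the same number of pieces, witnessing $\ccns(P) \leq k$. Both constructions are inverse up to a shift in subdivision level, so the infima defining the two invariants agree.

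The main obstacle will be verifying that the $\Sigma_n$-equivariance built into the ``$\Sigma$''-version of both invariants is preserved at every step. Because the Basabe--Gonz\'alez--Rudyak--Tamaki formulation of symmetry is not merely invariance of the cover but a compatibility between sections on $\Sigma_n$-related open sets, the functorial passages $P \leftrightarrow \KK(P)$ must be shown to intertwine the natural $\Sigma_n$-actions on the $n$-fold products and to send equivariant sections to equivariant sections. A secondary bookkeeping issue is aligning subdivision indices on the two sides so that the infima match exactly; this should be routine once the functoriality of $\KK$ and $\chi$ with respect to the $\Sigma_n$-action is established. Putting these ingredients together yields $\ccns(P) = \scns(\KK(P))$, and combining with Theorem A completes the chain $\ccns(P) = \scns(\KK(P)) = \tcns(|\KK(P)|)$.
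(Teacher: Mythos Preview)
Your proposal is correct and follows essentially the same route as the paper: one inequality is obtained by applying the order-complex functor $\KK$ to a symmetric combinatorial planner (this is Lemma~\ref{cc geq sc}(i), resting on Proposition~\ref{homotopy to contiguous in face poset}), the reverse inequality by applying the face-poset functor $\XX$ to a symmetric simplicial planner (via Lemma~\ref{contiguous to homotopy in face poset}), and the final equality comes from Theorem~A. Your identification of the $\Sigma_n$-equivariance transport as the main technical point, and of the subdivision-index shift as a bookkeeping matter, matches the paper's treatment; just note that the relevant cover pieces on the combinatorial side are \emph{open} subsets of $\sd^r(P^n)$ (i.e.\ down-closed, not merely order-convex), and that the precise mechanism by which ``sections descend'' is the pair of lemmas converting symmetric homotopy to symmetric contiguity under $\KK$ and back under $\XX$.
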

\noindent (See Theorem \ref{thm sym cc sym tc equal})
\begin{mysubsection}{Organization} The organization of the rest of the paper is as follows. In Section 2, we recall basic ideas of topological complexity and symmetric topological complexity. In Section 3, we  introduce symmetric simplicial complexity of a simplicial complex $K$ and prove Theorem A. In Section 4, define higher symmetric combinatorial complexity of a finite space $P$ and prove Theorem B. 
\end{mysubsection}
\ack{The first author would like to thank IIT Kanpur for PhD fellowship and the second author would like to thank SERB for project grant MTR/2020/000343. }
\end{section}

\begin{section}{Symmetric topological complexity}
		
In this section we first recall the definition of topological complexity and  its higher versions. Then we review basic concepts of symmetric topological complexity.  For details we refer to \cite{Far03, Far08, Rud10, FarG07, BasGRT14}.
\begin{mysubsection}{Topological complexity}
Let $q : E \rightarrow B$ be a fibration. The \emph{sectional category} of $q$ is the minimum integer $k$ such that $B$ can be cover by $k$ open subsets, $U_1 \cup U_2 \cup \cdots \cup U_k = B$ and on each $U_i$ there is a section $s_i : U_i \to E$ of $q$. It is denoted by
$\secat(q)$. If no such $k$ exists then we say $\secat(q) =\infty$. Then topological complexity of $X$  can be described as $\TC(X) = \secat(p)$, where $p$ is the fibration of Equation \ref{p}.
			
Suppose $I_n, n \geq 2,$ denote the wedge of $n$ intervals $[0, 1] \times \{j\}$, where $(0, j), j\in \{1, 2, \cdots, n\}$ are identified. We denote $j$-th interval by $[0 ,1]_{j}$ and the parameter in $[0 ,1]_{j}$ by $(t,j) = t_j$. For any two points $t^1_j, t^2_j \in [0, 1]_j$ we write $t^2_j \geq t^1_j$ if and only if $t^2 \geq t^1$. Consider the mapping space $X^{I_n}$ and the fibration
\begin{equation}\label{e}
e_n : X^{I_n} \rightarrow X^n ,~~~\alpha \mapsto  (\alpha(1_1), \alpha(1_2),\cdots,\alpha(1_n)).
\end{equation}
\noindent Then the \emph{n-th topological complexity} of $X$ is defined to be $\TC_n(X)  := \secat(e_n)$. It can be defined alternatively as  $\TC_n(X) = \secat(e'_n)$, where 
\begin{equation}\label{e'} e'_n : X^{I} \rightarrow X^n ,  ~~~~e'_n(\alpha) = (\alpha(0), \alpha(\frac{1}{n-1}), \alpha(\frac{2}{n-1}),\cdots,\alpha(1)).
\end{equation}
\noindent This is because $e_n$ and $e_n'$ are both fibrational replacement of the diagonal map $X\to X^n$. Clearly $\TC_2(X) = \TC(X)$.
 Also it is known that $\{\TC_n(X)\}$ is a non-decreasing sequence. If a space $Y$ is homotopy equivalent to $X$, then $\TC_n (Y) = \TC_n (X)$ for any $n \geq 2$. Consequently, $X$ is contractible  if and only if $ \TC_n(X) = 1$ for any $n \geq 2$.
\end{mysubsection}

\begin{mysubsection}{Symmetric topological complexity}
To define symmetric topological complexity we need the notion of  \emph{equivariant sectional category}. Let $G$ be a finite group. A topological space $E$ with an action of a group $G$ is called a $G$-space. A subset $U\subset E$ is called \emph{$G$-invariant} if $gU \subseteq U$ for all $g \in G$. Consider another $G$-space $B$. A fibration $q : E \to B$ is called $G$-fibration if $q$ is a $G$-map, i.e $q(gx) = gq(x)$ for all $x \in E$ and $g \in G$. The \emph{equivariant sectional category} of a $G$-fibration $q$ is the minimum number $k$ such that $B$ can be cover by $G$-invariant open subsets, $U_1 \cup U_2 \cup \cdots \cup U_k = B$ and on each $U_i$ there is a $G$-section (a section which is $G$-map) of $q$. It is denoted by $\secat_G(q)$. If no such $k$ exists then we say $\secat_G(q) =\infty$.
					
Farber and Grant (\cite{FarG07}) introduced \emph{symmetric topological complexity} $\TC^S(X)$ for a path connected space $X$. For symmetric motion planning they consider a section $s : X \times X \to X^I$ (not necessarily continuous) of $p$ in Equation (\ref{p}), such a way that $s(x, x)(t) = x$ and $s(x, y)(t) = s(y, x)(1-t)$, for all $x, y\in X$ and $t \in I$. For this they take the subspaces $P'X = \{\alpha \in X^I; ~\alpha(0)\neq \alpha(1)\}$ and $F(X;2) = \{(x,y)\in X \times X; ~x \neq y \}$ and restrict the fibration $p$ on $P'X$, $$p' : P'X \to F(X;2).$$ Note that $\zz$-acts freely on both $P'X$ and $F(X;2)$ by $\alpha \to \alpha^{-1}$ and $(x, y) \to (y, x)$ respectively and $p'$ is $\zz$-map. Also, $P'X = p^{-1}(F(X;2))$. So $p'$ is a $\zz$-fibration. The symmetric topological complexity $\TC^S(X)$ is defined to be $\secat_{\zz}(p')+1$. The extra one comes to define motion planning on the diagonal of $X \times X$. Farber and Grant noticed that $\TC^S(X)$ is not homotopy invariant. To overcome this problem, Basabe, Gonz\'{a}lez, Rudyak, and Tamaki introduced another version of symmetric topological complexity $\tcs(X)$ of $X$, which is easier to handle and natural (cf. \cite{BasGRT14}). They also generalized it to
higher version $\tcns(X)$. In this paper we use Basabe, Gonz\'{a}lez, Rudyak, and Tamaki's definition of symmetric topological complexity. Let us recall the definition.
			
Consider the symmetric group $\Sigma_n$ of permutations of $n$ symbols. For any $g, g'\in \Sigma_n, j\in\{1, 2, \cdots, n\}$ we have $g'\circ g(j)=g'(g(j))$. Take the action (left) of $\Sigma_n$ on $X^n$ by permuting elements: that is $g(x_1, x_2, \cdots x_n) = (x_{g(1)},
x_{g(2)}, \cdots, x_{g(n)})$. Also, $\Sigma_n$ acts on ${I_n}$ by $gt_j=t_{g(j)}$, for $t\in[0, 1]$ and $j\in\{1, 2, \cdots, n\}$. This induces an action of $\Sigma_n$ on $X^{I_n}$ given by $g\alpha(t_j)=\alpha(t_{g(j)})$. So $X^n$ and $X^{I_n}$ are $\Sigma_n$-spaces and the fibration $e_n$ in Equation \ref{e} is a $\Sigma_n$-fibration.
\begin{definition}[\cite{BasGRT14}]
The \emph{symmetric topological complexity} $\tcns(X)$ of $X$ is defined to be:
$$\tcns(X) := \secat_{\Sigma_n}(e_n).$$
\end{definition}
\noindent Here a $\Sigma_n$-invariant subset of $X^n$ is called \emph{symmetric subset} and a section on a symmetric subset of $X^n$ is called a \emph{symmetric section}. From the definition it is clear that $\tcns(X) \geq \tcn(X)$. In (\cite{BasGRT14}), authors proved that $\tcns(X)$ is homotopy invariant, for any $n \geq 2$.

\begin{lemma}
Let $X$ and $Y$ be any two spaces. Then $n$ maps $f_1, f_2, \cdots, f_n : X \to Y$ are in same homotopy class if and only if there is a map $H : X \times I_n \to Y$ such that $H(x, 1_j) = f_j(x)$ for any $1\leq j \leq n$.					
\end{lemma}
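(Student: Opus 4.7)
The lemma splits naturally into two implications; neither involves real difficulty, only careful bookkeeping at the wedge point of $I_n$.

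For the ($\Leftarrow$) direction, the plan is to restrict $H$ to each interval. Given $H : X \times I_n \to Y$ with $H(x, 1_j) = f_j(x)$, define $H_j : X \times [0,1] \to Y$ by $H_j(x,t) = H(x, t_j)$. Since all basepoints $0_j \in I_n$ are identified, the value $H(x, 0_j)$ is independent of $j$; denote this common map by $g : X \to Y$, $g(x) := H(x, 0_1) = \cdots = H(x, 0_n)$. Then each $H_j$ is a continuous homotopy from $g$ to $f_j$, so every $f_j$ lies in the homotopy class of $g$, and hence all the $f_j$'s lie in the same homotopy class.

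For the ($\Rightarrow$) direction, I will build $H$ by assembling one homotopy per interval. Since $f_1, \ldots, f_n$ are pairwise homotopic, for each $j \geq 2$ choose a homotopy $H_j : X \times [0,1] \to Y$ with $H_j(x, 0) = f_1(x)$ and $H_j(x, 1) = f_j(x)$; take $H_1$ to be the constant homotopy at $f_1$. Define $H : X \times I_n \to Y$ by $H(x, t_j) = H_j(x, t)$ on the $j$-th interval. Since $H_j(x, 0) = f_1(x)$ for every $j$, the definitions agree on the wedge point, so $H$ is well-defined; continuity follows from the universal property of the wedge (equivalently, $X \times I_n$ is the colimit of the $X \times [0,1]_j$ glued along $X \times \{0\}$, and each $H_j$ is continuous). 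By construction $H(x, 1_j) = H_j(x,1) = f_j(x)$, as required.

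The only place to be mildly careful is the well-definedness and continuity at the base vertex of $I_n$, which is handled by the colimit description of the wedge; everything else is formal.
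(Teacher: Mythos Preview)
Your proof is correct and follows essentially the same approach as the paper: both glue homotopies $H_j$ from $f_1$ to $f_j$ along the wedge point for the forward direction, and restrict $H$ to the individual intervals for the converse. Your write-up is in fact slightly more careful than the paper's, since you explicitly address well-definedness and continuity at the wedge point via the colimit description of $I_n$, whereas the paper leaves this implicit.
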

\begin{proof}
Let us assume that the maps $f_1, f_2, \cdots, f_n : X \to Y$ are in same homotopy class. Then there are homotopy maps $H_j : X \times I \to Y$ such that $H_j(x, 0) = f_1(x)$ and $H_j(x, 1)= f_j(x)$ for $1\leq j \leq n$. That means $H_j$ is a homotopy from $f_1$ to $f_j$. Since $H_j$ assume the value at $(x, 0)$ for all $j$, so we can define a map $H : X \times I_n \to Y$ with $H(x, 1_j) = f_j(x)$ for $1\leq j \leq n$.	
	
Conversely, if we restrict the map $H$ on union of $[0, 1]_i$ and $[0, 1]_j$ in $I_n$, it gives homotopy between $f_i$ and $f_j$ for $1\leq i\neq j\leq n$.	
\end{proof}

In the above case we call $H$ a \emph{homotopy between} $f_1, f_2, \cdots, f_n$.
\begin{definition}\label{dsymho}
 Assume that $\bX$ is a $\Sigma_n$-space and $Y$ any topological space. Then maps $f_1, f_2, \cdots, f_n: \bX \to Y$, satisfying  $f_j(gx)=f_{g(j)}(x)$, are called \emph{symmetrically homotopic} if  there is a homotopy $H : \bX \times I_n \to Y$ between them satisfying $H(gx, t_j) = H(x, t_{g(j)})$ $g\in \Sigma_n, ~x\in \bX, ~t\in[0, 1], ~1\leq j \leq n$. In this case we call $H$ a \emph{symmetric homotopy} between $f_1, f_2, \cdots, f_n$. Note that the homotopy satisfies the relation $H(gx, 0) = H(x, 0), ~~x \in \bX, ~g \in \Sigma_n$.
\end{definition}
Let $X$ be a topological space and $A$ be a $\Sigma_n$-invariant subset of $X^n$. Define $p_j: A \to X$ as the composition $A
\hookrightarrow X^n \to X$, where the first map is the inclusion and the second map is the projection onto the $j$-th factor. Then clearly $p_j(gx)=p_{g(j)}(x),~x \in A, g\in \Sigma_n$. We will use the following lemma to define symmetric simplicial complexity.

\begin{lemma}\label{lemma homotopy}
With notations as above, the maps $\{p_j:~ 1\leq j\leq n\}$ are symmetrically homotopic if and only if $e_n : X^{I_n} \to X^n$ admits a symmetric section on $A$.
\end{lemma}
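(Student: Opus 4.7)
The plan is to set up the standard exponential correspondence between maps $A\times I_n\to X$ and maps $A\to X^{I_n}$, and show that under this bijection symmetric homotopies between the projections $p_1,\dots,p_n$ correspond exactly to $\Sigma_n$-equivariant sections of $e_n$ over $A$. Because $e_n$ is defined by evaluation at the endpoints $1_j$ of $I_n$ and the $\Sigma_n$-actions on both sides are defined by permuting the index $j$, this adjunction should match all the structure in sight; the only substantive thing to check is that the conditions line up.

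First I would record that the projections satisfy the compatibility $p_j(gx)=p_{g(j)}(x)$ demanded by Definition \ref{dsymho}: if $x=(x_1,\dots,x_n)\in A$, then $p_j(gx)=x_{g(j)}=p_{g(j)}(x)$. So it is meaningful to ask whether they are symmetrically homotopic.

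For the forward direction, assume $H:A\times I_n\to X$ is a symmetric homotopy with $H(x,1_j)=p_j(x)$ and $H(gx,t_j)=H(x,t_{g(j)})$. Define $s:A\to X^{I_n}$ by $s(x)(t_j)=H(x,t_j)$. The exponential law makes $s$ continuous. Evaluating at $t=1$ gives $e_n(s(x))=(s(x)(1_1),\dots,s(x)(1_n))=(p_1(x),\dots,p_n(x))=x$, so $s$ is a section of $e_n$. For $\Sigma_n$-equivariance, recall that the action on $X^{I_n}$ is $(g\alpha)(t_j)=\alpha(t_{g(j)})$; hence
\[
s(gx)(t_j)=H(gx,t_j)=H(x,t_{g(j)})=s(x)(t_{g(j)})=(g\cdot s(x))(t_j),
\]
so $s(gx)=g\cdot s(x)$. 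Thus $s$ is a symmetric section on $A$.

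For the reverse direction, assume $s:A\to X^{I_n}$ is a $\Sigma_n$-equivariant section of $e_n$ and define $H:A\times I_n\to X$ by $H(x,t_j)=s(x)(t_j)$; this is well-defined at the wedge point since $s(x)$ is a single map out of $I_n$, and continuous by the exponential law. Since $s$ is a section, $H(x,1_j)=s(x)(1_j)=p_j(x)$, and the $\Sigma_n$-equivariance of $s$ together with the formula for the action on $X^{I_n}$ gives
\[
H(gx,t_j)=s(gx)(t_j)=(g\cdot s(x))(t_j)=s(x)(t_{g(j)})=H(x,t_{g(j)}),
\]
so $H$ is a symmetric homotopy between $p_1,\dots,p_n$. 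Specializing to $t=0$ recovers $H(gx,0)=H(x,0)$. The two constructions are inverse to each other, completing the proof.

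The only place demanding any care is the bookkeeping at the wedge point $0\in I_n$ (one must know that $H(x,0_j)$ is independent of $j$, which is automatic from the domain being $I_n$ rather than a disjoint union of intervals) and the matching of the two $\Sigma_n$-actions through the evaluation map; once those are noted, the verification is formal.
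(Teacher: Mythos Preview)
Your proof is correct and follows exactly the same approach as the paper: both set $s(x)(t_j)=H(x,t_j)$ via the exponential correspondence and observe that the section condition matches $H(x,1_j)=p_j(x)$ while $\Sigma_n$-equivariance of $s$ matches the symmetric homotopy condition $H(gx,t_j)=H(x,t_{g(j)})$. Your write-up is simply more detailed than the paper's terse version.
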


\begin{proof}
A symmetric section $s : A \to X^{I_n}$ of $e_n$ satisfies, $s(gx)(t_j) = s(x)(t_{g_(j)}).$ A symmetric homotopy $H : A \times I_n \to X$ between $p_1, p_2, \cdots, p_n$ satisfies $H(gx, t_j) = H(x, t_{g(j)}).$ Hence if we set $$s(x)(t_j) = H(x, t_j),~~x\in A, t_j \in I_n,$$ then the existence of one of $s$ and $H$ implies the existence of the other.
\end{proof}
			
\begin{remark}\label{remark sym tc definition} 
In view of Lemma \ref{lemma homotopy}, the symmetric topological complexity $\tcns(X)$ can be described as the minimum integer $k$ such that $X^n$ can be covered by $\Sigma_n$-invariant open subsets, $U_1 \cup U_2 \cup \cdots \cup U_k = X^n$ and on each $U_i$ composition maps $p_1, p_2, \cdots, p_n : U_i \hookrightarrow X^n \to X$ are symmetrically homotopic. 
\end{remark}
 The following proposition is a simple equivariant analogue of \cite[Proposition 2.2]{Pau19}.
			
\begin{prop}\label{prop locally complact}
Let $X$ be an ENR. Then $\tcns(X) = k$, where $k$ is the minimal integer such that there exist a $\Sigma_n$-equivariant section $s : X^n
\rightarrow X^{I_n}$ (which is not necessarily continuous) of the fibration $e_n$ and a splitting $ G_1 \sqcup G_2 \sqcup \cdots \sqcup G_r = X^n$, where each $G_i$ is locally compact and $\Sigma_n$-invariant subset of $X^n$ and each restriction $s_{|G_i} : G_i \rightarrow X^{I_n}$ is continuous for $i = 1, 2,\cdots,r$.
\end{prop}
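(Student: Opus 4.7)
The plan is to establish the two inequalities $k\le\tcns(X)$ and $\tcns(X)\le k$ separately. For the easier direction, I would suppose $\tcns(X)=m$ is witnessed, via Remark \ref{remark sym tc definition}, by a $\Sigma_n$-invariant open cover $U_1,\dots,U_m$ of $X^n$ together with continuous $\Sigma_n$-equivariant sections $\sigma_i\colon U_i\to X^{I_n}$ of $e_n$. Disjointifying by $G_1=U_1$ and $G_i=U_i\setminus(U_1\cup\cdots\cup U_{i-1})$ yields $\Sigma_n$-invariant subsets (each $U_j$ is) that are locally closed in the ENR $X^n$, hence locally compact. Setting $s|_{G_i}=\sigma_i|_{G_i}$ gives a $\Sigma_n$-equivariant (possibly discontinuous) section whose restriction to every $G_i$ is continuous, so $k\le m$.

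For the reverse inequality $\tcns(X)\le k$, I would start from the given section $s$ and partition $\{G_i\}_{i=1}^{k}$ and produce a $\Sigma_n$-invariant open cover $W_1,\dots,W_k$ of $X^n$ equipped with continuous $\Sigma_n$-equivariant sections $\tilde s_i\colon W_i\to X^{I_n}$. Since $X$ is an ENR, so is $X^n$, and each locally compact $\Sigma_n$-invariant $G_i$ is locally closed in $X^n$, hence an equivariant ENR under the permutation $\Sigma_n$-action. Because $\Sigma_n$ is finite, averaging a classical Borsuk-style neighborhood retraction over the orbits of $\Sigma_n$ produces a $\Sigma_n$-invariant open neighborhood $W_i$ of $G_i$ together with a $\Sigma_n$-equivariant deformation retraction $H_i\colon W_i\times I\to W_i$ from $\Id_{W_i}$ to a retraction $r_i\colon W_i\to G_i$, with $H_i(w,t)=w$ for all $w\in G_i$.

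To promote $s|_{G_i}$ to a continuous section on $W_i$, I would work arm-by-arm on the wedge $I_n$. Fix $v\in W_i$ and set $w=r_i(v)\in G_i$. The element $s(w)\in X^{I_n}$ is a continuous map $I_n\to X$ whose $j$-th arm is a path $\alpha_j$ from the common wedge-basepoint $\alpha_j(0)$ to $w_j$, and the map $t\mapsto H_i(v,1-t)_j$ is a path in $X$ from $w_j$ to $v_j$. Concatenating these two paths on the $j$-th arm of $I_n$ (reparametrised on $[0,1/2]$ and $[1/2,1]$ respectively) produces $\tilde s_i(v)\in X^{I_n}$ with $e_n(\tilde s_i(v))=v$, well-defined because the basepoint $\alpha_j(0)$ is independent of $j$. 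Continuity in $v$ is inherited from $s|_{G_i}$, $r_i$, and $H_i$, and the identity $H_i(gv,t)_j=H_i(v,t)_{g(j)}$ combined with the $\Sigma_n$-equivariance of $s|_{G_i}$ yields $\tilde s_i(gv)(t_j)=\tilde s_i(v)(t_{g(j)})$. Since the $G_i$ partition $X^n$, the $W_i$ form a $\Sigma_n$-invariant open cover, giving $\tcns(X)\le k$.

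The main technical hurdle is producing the $\Sigma_n$-equivariant neighborhood deformation retraction onto $G_i$; this rests on equivariant ENR theory for finite group actions, which amounts to averaging classical Borsuk neighborhood retracts over $\Sigma_n$. Once that is in hand, the rest of the argument is bookkeeping to translate the partial section into a full section on $W_i$ while preserving both continuity and the symmetric relation $\tilde s_i(gv)(t_j)=\tilde s_i(v)(t_{g(j)})$.
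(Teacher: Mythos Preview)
Your first inequality $k\le\tcns(X)$ is correct and matches the standard disjointification argument.

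The reverse inequality contains a genuine gap. You assert that each locally compact $\Sigma_n$-invariant $G_i$, being locally closed in the ENR $X^n$, is itself an (equivariant) ENR, and you then build your extension $\tilde s_i$ from an equivariant neighborhood deformation retraction $W_i\to G_i$. But a locally closed subset of an ENR need not be an ENR: the compact set $\{0\}\cup\{1/n:n\ge1\}\subset\mathbb{R}$ is locally closed yet not locally connected, hence not an ENR, so no neighborhood retraction onto it exists. Nothing in the hypotheses forces the pieces $G_i$ to be better than arbitrary locally compact subsets, so your retraction $r_i$ and homotopy $H_i$ are not available in general, and the concatenation construction collapses.

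The argument the paper invokes (via \cite[Proposition~2.2]{Pau19}, which in turn follows Farber's original ENR argument) proceeds differently: one uses the ANR property of the \emph{target} $X^{I_n}$ rather than any retract property of $G_i$. Since $G_i$ is locally closed it is closed in some open $O_i\subset X^n$; because $X^{I_n}$ is an ANR, the continuous equivariant map $s|_{G_i}$ extends to a continuous equivariant $\hat s_i\colon V_i\to X^{I_n}$ on an invariant open $V_i$ with $G_i\subset V_i\subset O_i$. The composite $e_n\circ\hat s_i$ agrees with the inclusion on $G_i$, so after shrinking $V_i$ it is (equivariantly) homotopic to the inclusion; lifting this homotopy through the $\Sigma_n$-fibration $e_n$ yields a genuine equivariant section on $V_i$. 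All equivariant steps go through because $\Sigma_n$ is finite. This is the missing idea you need to replace the unjustified ENR claim for $G_i$.
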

			
\begin{proof}
The proof is similar to \cite[Proposition 2.2]{Pau19}, only here the sections are $\Sigma_n$-map and the subsets are $\Sigma_n$-invariant.
\end{proof}
			
\noindent Combining Remark \ref{remark sym tc definition} and Proposition \ref{prop locally complact}, we have the following.
			
\begin{corollary}\label{prop homotopy}
Let $X$ be an ENR. Then $\tcns(X)$ is the minimal integer $k$ such that there exist a splitting $ G_1 \sqcup G_2 \sqcup \cdots \sqcup G_r = X^n$ with each $G_i$ is locally compact and $\Sigma_n$-invariant subset of $X^n$ and on each $G_i$ the composition of inclusion with the projections maps $p_1, p_2, \cdots, p_n : G_i \hookrightarrow X^n \to X$ are symmetrically homotopic.
\end{corollary}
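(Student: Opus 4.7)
The plan is to deduce this corollary directly by combining Proposition \ref{prop locally complact}, which characterizes $\tcns(X)$ in terms of the existence of a global (possibly discontinuous) $\Sigma_n$-equivariant section of $e_n$ that becomes continuous on each piece of a splitting, with Lemma \ref{lemma homotopy}, which translates between continuous symmetric sections of $e_n$ on a $\Sigma_n$-invariant subset $A$ and symmetric homotopies among the projection maps $p_1,\ldots,p_n : A \to X$.

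For the forward direction, I would start from $\tcns(X) = k$ and invoke Proposition \ref{prop locally complact} to produce a splitting $X^n = G_1 \sqcup \cdots \sqcup G_k$ into locally compact, $\Sigma_n$-invariant subsets together with a global $\Sigma_n$-equivariant section $s : X^n \to X^{I_n}$ whose restriction to each $G_i$ is continuous. Each restriction $s|_{G_i}$ is then a continuous symmetric section of $e_n$ over the $\Sigma_n$-invariant subset $G_i$, and so Lemma \ref{lemma homotopy} (applied with $A = G_i$) immediately produces a symmetric homotopy between the projections $p_1,\ldots,p_n : G_i \hookrightarrow X^n \to X$.

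For the reverse direction, I would begin with a splitting $X^n = G_1 \sqcup \cdots \sqcup G_k$ and symmetric homotopies on each $G_i$. Lemma \ref{lemma homotopy} applied to each $G_i$ yields a continuous symmetric section $s_i : G_i \to X^{I_n}$ of $e_n$. I would patch these together into a (not necessarily continuous) global map $s : X^n \to X^{I_n}$ by setting $s(x) = s_i(x)$ for $x \in G_i$; this is well-defined because the $G_i$ are pairwise disjoint and cover $X^n$. The $\Sigma_n$-equivariance of $s$ is what must be checked, and is the only substantive point: since each $G_i$ is $\Sigma_n$-invariant, for $x \in G_i$ and $g \in \Sigma_n$ we have $gx \in G_i$, hence $s(gx) = s_i(gx) = g\cdot s_i(x) = g\cdot s(x)$ by the $\Sigma_n$-equivariance of $s_i$. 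Proposition \ref{prop locally complact} then gives $\tcns(X) \leq k$, and minimality of $k$ on both sides yields equality.

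There is no real obstacle to expect; the corollary is a repackaging of the two preceding results, and the only technical check is the compatibility of the piecewise construction with the $\Sigma_n$-action, which is guaranteed precisely by the assumed $\Sigma_n$-invariance of each $G_i$.
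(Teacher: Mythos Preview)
Your proposal is correct and is essentially the same as the paper's own argument: the paper simply states that the corollary follows by combining Proposition~\ref{prop locally complact} with Remark~\ref{remark sym tc definition}, and since Remark~\ref{remark sym tc definition} is itself nothing more than the reformulation obtained from Lemma~\ref{lemma homotopy}, your direct use of Proposition~\ref{prop locally complact} together with Lemma~\ref{lemma homotopy} is exactly the intended deduction, spelled out in slightly more detail.
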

\end{mysubsection}
\end{section}

\begin{section}{Symmetric simplicial complexity}
Gonz\'{a}lez introduced the notion of simplicial complexity $\SC(K)$ for simplicial complex $K$ (cf. \cite{Gon18}). This is based on contiguity of simplicial maps. It is proved in that simplicial complexity $\SC(K)$  is equal to the topological complexity $\TC(|K|)$ of the geometric realization of $K$, for a finite simplicial complex $K$. This has been generalized for higher simplicial complexity (cf. \cite{Pau19}). In this section we first recall some basic ideas on simplicial complexes and simplicial complexity. After that we introduce symmetric simplicial complexity $\scs(K)$ and its higher version $\scns(K)$. Finally we show that $\scns(K) = \tcns(|K|)$.
\begin{mysubsection}{Simplicial complexes}\label{simplicialcomplex}
We begin by recalling some basic ideas of simplicial complexes (\cite{Bre72, EilS52, Spa66}).  A \emph{simplicial complex} $K$ consists of of a set $V(K)$, called \emph{vertices} and a set $S(K)$ of finite nonempty subsets of of $V(K)$, called \emph{simplexes} such that,
\begin{enumerate}[(a)]
\item Singleton subsets of $V(K)$ is a simplex. 
\item Any non empty subset of a simplex is a simplex.
\end{enumerate} 

 We say $K$ is a finite simplicial complex if the set $V(K)$ is finite. A set $\sigma \in S(K)$ with $q+1$ elements is called a $q$-\emph{simplex} and if $\sigma' \subset \sigma$ then $\sigma'$ is called a \emph{face} of $\sigma$. A \emph{simplicial map} $\phi : K \to L$ is a function from the vertices of $K$ to the vertices of $L$ such that for any simplex $\sigma = \{v_0, v_1, \cdots, v_q\}$ in $K$ the image $\phi(\sigma) = \{\phi(v_0), \phi(v_1), \cdots, \phi(v_q)\}$ is a simplex in $L$ (possibly of lower dimension).  For a poset $P$ the \emph{order complex} $\mathcal{K}(P)$ is the simplicial complex whose vertex set is $P$ and  simplexes are totally ordered finite subsets of $P$.  The simplex set $S(K)$ is naturally a poset with inclusion of faces, called \emph{face poset} and we denote it by $\XX(K)$. The order complex of $\XX(K)$ is called the \emph{barycentric subdivision} $\sd(K)$ of $K$.  Thus the set of vertices of $\sd(K)$ is the set $S(K)$ and a $q$-simplex of $\sd(K)$ is a chain $\sigma_0 \subsetneq \sigma_1 \subsetneq\cdots \subsetneq \sigma_q$ of face inclusions of simplexes of $K$.  For any simplicial complex $K$ the \emph{geometric realization} $|K|$ is the set of all functions $\alpha : V(K) \to I=[0, 1]$ such that: (i) for any $\alpha$, the set $\{v\in V(K); \alpha (v)\neq 0\}\in S(K)$, (ii) for any $\alpha, \sum_{v\in V(K)}\alpha(v)=1$.  Then the linear map $|\sd(K)|  \to |K|$ takes each vertex of $\sd(K)$ to the corresponding point of $|K|$ which is a homeomorphism.  For any vertex $v\in V(K)$ the \emph{open star} of $v$ denoted by $\st(v)$ and defined as $\st(v)= \{\alpha \in |K|; \alpha(v)\neq 0\}$. Recall that a vertex map $\phi: V(K) \to V(L)$ is a simplicial approximation of a continuous map $f: |K|\to |L|$ if and only if $f(\st(v)) \subset \st(\phi(v))$ for all $v \in V(K).$

The categorical product of simplicial complexes do not possess the desired property that $|K\times L| = |K| \times |L|.$ To overcome this we need the notion of ordered simplicial complex (cf. \cite{EilS52, PorSZ17}).
\begin{definition}
An \emph{ordered simplicial complex} $K$ is a simplicial complex $K$ together with a partial order on its set of vertices, restricting to a total order on each of its simplices.  The simplexes are denoted by ordered sets $\{v_0 \leq v_1 \leq \cdots \leq v_q\}.$
\end{definition}
\begin{example}\label{exosc}
\begin{enumerate}[(a)]

\item Every simplicial complex $K$ can be thought as an ordered simplicial complex by selecting a total order on its vertex set.
\item The order complex of a poset is an ordered simplicial complex. 
\item In particular, for any simplicial complex $K$, the subdivision $\sd(K)$ is an ordered simplicial complex structure with respect to inclusion of faces ordering.
\end{enumerate}
\end{example}

\noindent In general, in an ordered simplicial complex vertices of each simplex is totally ordered but a totally ordered finite subset of vertices may not be a simplex.  
 
 \begin{definition}\label{dscprod}
 The \emph{cartesian product} $K\times L$ of two ordered simplicial complexes  $K$ and $L$ is also an ordered simplicial complex whose vertex set is $V(K) \times V(L)$ with partial order given by $(u_1, v_1) \leq (u_2, v_2)$ if and only if $u_1 \leq u_2$ and $v_1 \leq v_2$. An ordered set $\{(u_0, v_0)\leq (u_1, v_1) \leq \cdots \leq (u_q, v_q)\}$ is a $q$-simplex in $K\times L$ if $\{ u_0\leq u_1 \leq \cdots \leq u_q\}$ and $\{v_0\leq v_1 \leq \cdots \leq v_q\} $ are simplexes of $K$ and $L$ respectively. 
 
 Then the projection maps $p_1: K\times L \to K$ and $p_2: K \times L \to L$ induces homeomorphism $|p_1|\times |p_2|: |K\times L| \to |K|\times |L|$.  In particular $|K^n| = |K|^n.$
\end{definition}

The notion of homotopy of continuous maps is replaced by \emph{contiguity} of simplicial maps (see \cite{Spa66}). For a positive integer $c$, two simplicial maps $\phi , \phi' : K \rightarrow L$ are called $c$-\emph{contiguous} if there is a sequence of simplicial maps $\phi = \phi^0, \phi^1, \phi^2 \cdots, \phi^c = \phi' : K \rightarrow L$, such that $\phi^{i-1}(\sigma) \cup \phi^{i}(\sigma)$ is a simplex of $L$ for each simplex $\sigma$ of $K$ and $i \in \{1, 2,\cdots,c\}$.  We write $\phi \sim \phi'$ if  $\phi$ and $\phi'$ are $c$-contiguous for some positive integer $c$. This defines an equivalence relation on the set of simplicial maps $K \rightarrow L$ and the equivalence classes are called \emph{contiguity classes}. If $\phi_1, \phi_2, \cdots, \phi_n : K \to L$ are simplicial maps belonging to the same contiguity class then there is a simplicial map $\phi_* : K \to L$ and a positive integer $c$ such that each $\phi_j$ is $c$-contiguous with $\phi_*$. Simplicial approximations of a same continuous map are $1$-contiguous, i.e, they are in same contiguity class. For  any simplicial complexes $J, K, L, M$, if the simplicial maps $\phi, \phi' : K \to L$ are $c$-contiguous, then for any simplicial maps $\psi : J \to K$, $\theta : L \to M$, the composition simplicial maps $\theta \circ \phi \circ \psi, \theta \circ \phi'\circ \psi : J \to K \to L \to M$ are also $c$-contiguous. 

\begin{mysubsection}{Equivariant simplicial complexes}
Let $G$ be a finite group. A \emph{simplicial $G$-complex} is a simplicial complex $K$ with simplicial $G$-action, that is: the group $G$-acts on the vertex set $V(K)$ such that for a simplex $\sigma = \{v_0, \cdots, v_q\}\in S(K)$ we have $g\sigma:=\{gv_0, \cdots, gv_q\}$ is also a simplex (of same dimension).  In addition, if $K$ is an ordered simplicial complex and $G$ preserves the ordering of
each simplex of $K$, we call $K$ an \emph{ordered simplicial $G$-complex}. A \emph{morphism} between simplicial $G$-complexes $K$ and $L$ is a simplicial map $\phi: K\to L$ such that $\phi$ is $G$-equivariant map on the vertex set (cf.  \cite{Bre72}, \cite{PorSZ17}).
\begin{example}\label{exprod}
\begin{enumerate}[(a)]
\item  Let $K$ be an simplicial complex. Consider it as an ordered simplicial complex (see Example \ref{exosc}). Then $K^n = K\times K \times \cdots \times K$ is also an ordered simplicial complex (see Definition \ref{dscprod}). It is a simplicial $\Sigma_n$-complex with natural permutation action on the vertex set $V(K)^n$: $g(v_1, v_2, \cdots, v_n) = (v_{g(1)}, v_{g(2)}, \cdots, v_{g(n)}) $. This action also preserves the ordering. So $K^n$ is an ordered simplicial $\Sigma_n$-complex.
\item Let $K$ be a simplicial $G$-complex. Then the face poset $\XX(K)$ has a natural order preserving $G$-action. Hence ts order complex  $\KK(\XX(K))$, which is the barycentric subdivision $\sd(K),$ is also a simplicial $G$-complex. Moreover $\sd(K)$ is ordered (see Example \ref{exosc}) and the $G$-action is order preserving.
 \end{enumerate}
 \end{example}

Let $K$ and $L$ be two simplicial $G$-complexes and $f : |K| \to |L|$ be a $G$-map. A simplicial approximation $\phi : \sd^r(K) \to L$ of $f$ is called a \emph{$G$-simplicial approximation} or simply \emph{$G$-approximation} if $\phi$ is $G$-equivariant on the vertex sets.

\begin{lemma}\label{lemma approx of identity}
For an ordered simplicial $G$-complex $K$, there is a $G$-simplicial approximation $\iota : \sd(K) \to K$ of identity $\Id : |K| \to |K|$. 
\end{lemma}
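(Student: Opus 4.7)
The plan is to construct $\iota$ by exploiting the total order that the ordered simplicial structure places on the vertices of each simplex, together with the hypothesis that $G$ preserves this order. Recall that a vertex of $\sd(K)$ is a simplex $\sigma \in S(K)$, and a simplex of $\sd(K)$ is a chain $\sigma_0 \subsetneq \sigma_1 \subsetneq \cdots \subsetneq \sigma_q$ of faces of $K$. For each vertex $\sigma$ of $\sd(K)$, I would define
\[
\iota(\sigma) := \max(\sigma),
\]
where the maximum is taken with respect to the total order on the vertex set of $\sigma$ supplied by the ordered structure. This makes sense because each simplex of $K$ is by hypothesis totally ordered.

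Next I would verify that $\iota$ is a simplicial map. Given a chain $\sigma_0 \subsetneq \sigma_1 \subsetneq \cdots \subsetneq \sigma_q$ in $\sd(K)$, each $\iota(\sigma_i) = \max(\sigma_i)$ is a vertex of $\sigma_q$, so the image $\{\iota(\sigma_0), \ldots, \iota(\sigma_q)\}$ is a (possibly degenerate) subset of $\sigma_q$ and hence a simplex of $K$. To check that $\iota$ is a simplicial approximation of $\Id : |K| \to |K|$, I would verify the open-star condition: for each vertex $\sigma$ of $\sd(K)$, $\Id(\st_{\sd(K)}(\sigma)) \subset \st_{K}(\iota(\sigma))$. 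A point $\alpha \in \st_{\sd(K)}(\sigma)$ lies in the interior of some simplex of $\sd(K)$ containing $\sigma$ as a vertex, hence (viewing $\alpha \in |K|$ via the standard homeomorphism $|\sd(K)| \to |K|$) the carrier of $\alpha$ in $K$ contains every vertex of $\sigma$, in particular $\max(\sigma) = \iota(\sigma)$; therefore $\alpha(\iota(\sigma)) \neq 0$, which is exactly the condition $\alpha \in \st_K(\iota(\sigma))$.

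Finally I would check the $G$-equivariance on vertex sets. For $g \in G$ and $\sigma \in S(K)$, the action sends $\sigma$ to the simplex $g\sigma$, and by hypothesis the $G$-action preserves the total order on each simplex; therefore $g$ carries the maximal vertex of $\sigma$ to the maximal vertex of $g\sigma$, i.e.\ $g \cdot \max(\sigma) = \max(g\sigma)$. This gives $\iota(g\sigma) = g \iota(\sigma)$, which is the desired equivariance.

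The main potential obstacle is subtle rather than technical: an arbitrary choice of simplicial approximation of the identity (of the form ``pick any vertex of $\sigma$'') will not in general be $G$-equivariant, so one must produce a \emph{canonical} choice that is preserved by the group action. The assumption that $K$ is an \emph{ordered} simplicial $G$-complex, with $G$ acting by order-preserving simplicial automorphisms, is precisely the ingredient that makes the ``take the maximum'' (equivalently ``take the minimum'') choice both well-defined and $G$-equivariant, so the proof goes through without additional machinery.
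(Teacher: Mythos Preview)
Your proposal is correct and follows essentially the same approach as the paper: both define $\iota(\sigma) = \max(\sigma)$ using the total order on the vertices of each simplex, verify the open-star condition to confirm it is a simplicial approximation of the identity, and use the order-preserving hypothesis on the $G$-action to obtain equivariance. Your write-up is in fact more explicit than the paper's in checking that $\iota$ is simplicial and in spelling out the star-condition argument.
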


\begin{proof}	
Define a map $$\tau : \XX(K) = V(\sd(K)) \to V(K), ~~~~\{v_0\leq v_1\leq \cdots \leq v_k\} \mapsto v_k,$$ for any $\{v_0\leq v_1\leq \cdots \leq v_k\}\in \XX(K)$. This gives a simplicial map $\KK(\tau) = \iota : \sd(K) \to K$.  Note that the map $\iota$ is a (order preserving) $G$-map on the vertex sets. Since $$\Id(\st\{v_0\leq v_1\leq \cdots \leq v_k\})\subset \st(\{v_k\}),$$ so $\iota$ is a $G$-approximation of identity on $|K|$ (cf. \cite{HarV93}).
\end{proof}
\end{mysubsection}

Following definition is the simplicial analogue of symmetric homotopy (see Definition \ref{dsymho}). 
 
\begin{definition}			
Let $K$ be a simplicial $\Sigma_n$-complex, $L$ be any simplicial complex. Simplicial maps $\phi_1, \phi_2, \cdots, \phi_n : K \to L$, satisfying $\phi_j(gv) = \phi_{g(j)}(v),$ are called \emph{symmetrically contiguous} if  there is a simplicial map $\phi_* : K \to L$ and a positive integer $c$ such that  $\phi_*(gv)=\phi_*(v)$, each $\phi_j$ is $c$-contiguous with $\phi_*$ with intermediate maps $\phi^l_j$ satisfying $\phi^l_j(gv) = \phi^l_{g(j)}(v)$,  $v\in V(K), ~g\in \Sigma_n, ~1 \leq l\leq c , ~1\leq j \leq n$. 
\end{definition}	
	
We need the following result later. 
			
\begin{lemma} (\cite[Lemma 3.5.2]{Spa66},  \cite[Lemma 2.5]{Tan19})\label{lemma contiguity to homotopy}
Simplicial maps $\phi_1, \phi_2, \cdots, \phi_n : K \to L$ in the same contiguity class have homotopic topological realization. Moreover, if $K$ is a simplicial $\Sigma_n$-complex and $\phi_1, \phi_2, \cdots, \phi_n : K \to L$ are symmetrically contiguous then the realizations also symmetrically homotopic.
\end{lemma}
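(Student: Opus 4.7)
The first assertion is classical (Spanier 3.5.2). Given $1$-contiguous simplicial maps $\phi,\phi':K\to L$, for every $\alpha\in|K|$ supported on a simplex $\sigma$, both $|\phi|(\alpha)$ and $|\phi'|(\alpha)$ lie in the common simplex $|\phi(\sigma)\cup\phi'(\sigma)|$, so the straight-line formula
\[
(\alpha,s)\mapsto (1-s)\,|\phi|(\alpha)+s\,|\phi'|(\alpha)
\]
is a well-defined homotopy into $|L|$. For a $c$-contiguous chain $\phi=\phi^0,\phi^1,\dots,\phi^c=\phi'$ one concatenates the $c$ straight-line homotopies on the subintervals $[l/c,(l+1)/c]$. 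So the plan is to produce the symmetric homotopy by the same recipe, executed simultaneously for all branches, using the common map $\phi_{\ast}$ as the basepoint of the wedge $I_n$.

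Concretely, for each $j$ let $\psi^l_j:=\phi^{c-l}_j$, so that $\psi^0_j=\phi_{\ast}$ and $\psi^c_j=\phi_j$; the hypothesis $\phi^l_j(gv)=\phi^l_{g(j)}(v)$ becomes $\psi^l_j(gv)=\psi^l_{g(j)}(v)$, and consecutive $\psi^l_j,\psi^{l+1}_j$ are $1$-contiguous. Define $H_j:|K|\times[0,1]\to|L|$ piecewise by
\[
H_j(\alpha,s)=(1-(cs-l))\,|\psi^l_j|(\alpha)+(cs-l)\,|\psi^{l+1}_j|(\alpha)\quad\text{for } s\in[l/c,(l+1)/c].
\]
By construction $H_j(\alpha,0)=|\phi_{\ast}|(\alpha)$ is independent of $j$ and $H_j(\alpha,1)=|\phi_j|(\alpha)$, so the $H_j$'s glue into a continuous map $H:|K|\times I_n\to|L|$ by setting $H(\alpha,t_j):=H_j(\alpha,t)$, and we obtain $H(\alpha,1_j)=|\phi_j|(\alpha)$ as required.

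The only nontrivial point is the equivariance $H(g\alpha,t_j)=H(\alpha,t_{g(j)})$. This reduces to the pointwise identity $|\psi^l_j|(g\alpha)=|\psi^l_{g(j)}|(\alpha)$ on geometric realizations, which I would verify directly: writing the $\Sigma_n$-action on $|K|$ as $(g\alpha)(v)=\alpha(g^{-1}v)$, one computes for each vertex $w\in V(L)$
\[
|\psi^l_j|(g\alpha)(w)=\sum_{\psi^l_j(v)=w}\alpha(g^{-1}v)=\sum_{\psi^l_j(gu)=w}\alpha(u)=\sum_{\psi^l_{g(j)}(u)=w}\alpha(u)=|\psi^l_{g(j)}|(\alpha)(w),
\]
using $\psi^l_j(gu)=\psi^l_{g(j)}(u)$ in the third equality. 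Substituting this into the definition of $H_j$ on each subinterval yields $H_j(g\alpha,t)=H_{g(j)}(\alpha,t)$, which is exactly the desired identity. The same calculation at $l=c$ shows that the realizations themselves satisfy $|\phi_j|(g\alpha)=|\phi_{g(j)}|(\alpha)$, so that they constitute a legitimate tuple on which the symmetric homotopy relation is defined.

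The main obstacle is bookkeeping: one must make sure that the intermediate maps are indexed so that the $\Sigma_n$-action permutes the branches of the wedge in the correct direction, and that the common endpoint chosen for gluing is genuinely independent of $j$. The hypothesis $\phi_{\ast}(gv)=\phi_{\ast}(v)$ in the definition of symmetric contiguity is exactly what makes $\phi_{\ast}$ serve as this common basepoint, and reversing the indexing via $\psi^l_j=\phi^{c-l}_j$ is what aligns it with the wedge point of $I_n$; no further work is needed.
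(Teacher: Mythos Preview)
Your proof is correct and follows essentially the same approach as the paper: both construct $H$ on the $j$-th branch of $I_n$ as the concatenation of the $c$ straight-line homotopies between $|\phi_j^l|$ and $|\phi_j^{l+1}|$, using $\phi_\ast$ as the common value at the wedge point. Your reindexing $\psi_j^l=\phi_j^{c-l}$ is a harmless bookkeeping choice (the paper simply takes $\phi_j^0=\phi_\ast$ from the outset), and your explicit coordinate verification of $|\psi_j^l|(g\alpha)=|\psi_{g(j)}^l|(\alpha)$ fills in the equivariance claim that the paper asserts without detail.
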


\begin{proof}
Assume that $\phi_1, \phi_2, \cdots, \phi_n : K \to L$ lie in same contiguity class. Then there exist a positive integer $c$ and a simplicial map $\phi_* : K \to L$ such that for each $j\in \{1, 2,\cdots, n\}$, $\phi_*$ is $c$-contiguous with $\phi_j$ via the simplicial maps $\phi_*= \phi^0_j, \phi^1_j, \phi^2_j, \cdots, \phi^c_j = \phi_j : K \to L$. We define $H: |K| \times I_n \to |L|$ by, 
$$H(x, t_j) = c\Big(\frac{l+1}{c}-t\Big)\big(|\phi^l_j|(x)\big)+c\Big(t-\frac{l}{c}\Big)\big(|\phi^{l+1}_j|(x)\big), \ \text{for} \ t_j\in\Big[\frac{l}{c}, \frac{l+1}{c}\Big]_j$$
 where $1\leq j \leq n, l\in\{0, 1, \cdots, c-1\}$, $x\in|K|$ and $t$ denote the value of $t_j$ in $j$th interval. Then $H$ is a homotopy between $|\phi_1|, |\phi_2|, \cdots, |\phi_n|$. Moreover, if the maps $\{\phi_i:~~1\leq i\leq n\}$ are symmetrically contiguous then $H$ is a symmetric homotopy.
\end{proof}
		
 Iterated barycentric subdivisions of $K$ are defined by $\sd^{r+1}(K) := \sd(\sd^r(K)).$ The following proposition is a generalized version of \cite[Theorem 3.5.6]{Spa66},  \cite[Theorem 3.49]{Pra07} and \cite[Lemma 3.13]{Tan19}.

\begin{prop}\label{prop homotopy to contiguity}
Let $K$ be a simplicial $\Sigma_n$-complex and $L$ be any other simplicial complex.  If $n$ maps $f_1, f_2,\cdots,f_n : |K| \rightarrow |L|$ are symmetrically homotopic, then there is $r_0 \in \mathbb{N}$ and simplicial approximations $\phi_1, \phi_2,\cdots,\phi_n : \sd^{r_0}(K) \rightarrow L$ of  $f_1, f_2,\cdots,f_n$ respectively which are symmetrically contiguous.
\end{prop}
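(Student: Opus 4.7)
My plan is to apply simplicial approximation to the symmetric homotopy $H:|K|\times I_n\to|L|$ itself, arranging the vertex choices equivariantly so that a single simplicial approximation of $H$ simultaneously delivers the approximations $\phi_j$ and a $\Sigma_n$-compatible contiguity chain linking them. This is the natural equivariant adaptation of the standard non-symmetric argument behind the analogous result in \cite{Pau19}.

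I first regard $I_n$ as a simplicial $\Sigma_n$-complex with vertices $\{0\}\cup\{1_j:1\le j\le n\}$, edges $\{0,1_j\}$, and action $g\cdot 0=0$, $g\cdot 1_j=1_{g(j)}$. For an integer $c\ge 1$ let $I_n^{(c)}$ denote the $c$-fold equal subdivision with vertices $\{0\}\cup\{(l/c)_j\}$, partial order $0<(l/c)_j<((l+1)/c)_j$ within each interval, and $g\cdot(l/c)_j=(l/c)_{g(j)}$; this makes $I_n^{(c)}$ an ordered simplicial $\Sigma_n$-complex whose action preserves the order. Combined with the ordered simplicial $\Sigma_n$-complex $\sd^{r_0}(K)$ supplied by Example~\ref{exprod}(b) and the ordered product of Definition~\ref{dscprod}, the complex $\sd^{r_0}(K)\times I_n^{(c)}$ is an ordered simplicial $\Sigma_n$-complex whose geometric realization is $|K|\times I_n$.

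Since $K$ is finite, $|K|\times I_n$ is compact, so the open cover $\{H^{-1}(\st(w)):w\in V(L)\}$ has a Lebesgue number $\delta>0$. I pick $r_0$ and $c$ large enough that every open star of a vertex of $\sd^{r_0}(K)\times I_n^{(c)}$ has diameter below $\delta$, which guarantees, for each vertex $(v,\tau)$, a vertex $w\in V(L)$ with $H(\st(v,\tau))\subset\st(w)$. The symmetric-homotopy identity $H(gx,t_j)=H(x,t_{g(j)})$ at once yields
\[
H\bigl(\st(gv,(l/c)_j)\bigr)=H\bigl(\st(v,(l/c)_{g(j)})\bigr),
\]
and $H(gv,0)=H(v,0)$ at the wedge point. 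I therefore build $\Phi$ by picking a single target vertex $w$ of $L$ for every equivalence class of the relation on vertices of the product generated by $(gv,(l/c)_j)\sim(v,(l/c)_{g(j)})$ and $(gv,0)\sim(v,0)$; the displayed identity makes this consistent, as $H(\st(\cdot))$ is constant along each class, so any valid $w$ at one representative is valid at all. The assignment extends uniquely to a simplicial map $\Phi:\sd^{r_0}(K)\times I_n^{(c)}\to L$: for every simplex $\Delta$ of the product, any interior point $x$ lies in each $\st(v,\tau)$ with $(v,\tau)\in V(\Delta)$, so $H(x)\in\bigcap\st(\Phi(v,\tau))$, forcing those vertices to span a simplex of $L$.

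Setting $\phi^l_j(v):=\Phi(v,(l/c)_j)$, the map $\phi_*:=\phi^0_j$ is independent of $j$ (all $(0)_j$ coincide with the wedge point) and satisfies $\phi_*(gv)=\phi_*(v)$; the endpoint $\phi_j:=\phi^c_j$ is a simplicial approximation of $f_j=H(\cdot,1_j)$; and $\phi^l_j(gv)=\phi^l_{g(j)}(v)$ follows directly from $\Phi(gv,(l/c)_j)=\Phi(v,(l/c)_{g(j)})$. For each $l$ and each simplex $\sigma$ of $\sd^{r_0}(K)$, the set $\phi^l_j(\sigma)\cup\phi^{l+1}_j(\sigma)$ is the $\Phi$-image of the vertex set of the prism $\sigma\times\{(l/c)_j,((l+1)/c)_j\}$, and the same star-intersection argument applied to an interior point of that prism shows it spans a simplex of $L$; hence $\phi^0_j,\phi^1_j,\dots,\phi^c_j$ is a $c$-step contiguity chain from $\phi_*$ to $\phi_j$ whose intermediate maps all obey the required equivariance, which is exactly the definition of symmetric contiguity. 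I expect the main obstacle to be the equivariant construction of $\Phi$: the condition $\Phi(gv,(l/c)_j)=\Phi(v,(l/c)_{g(j)})$ is not invariance under any genuine $\Sigma_n$-action on the vertex set once $n\ge 3$ (the composition law $h\cdot(g\cdot(v,\tau))$ fails to match $(hg)\cdot(v,\tau)$ in general), so an off-the-shelf equivariant simplicial approximation theorem cannot simply be quoted; the argument must rely on the displayed star-identity to verify that a single target vertex is compatible across each equivalence class generated by the symmetric-homotopy relation.
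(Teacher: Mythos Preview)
Your approach---approximating the entire homotopy $H$ by a single simplicial map $\Phi$ on the ordered product $\sd^{r_0}(K)\times I_n^{(c)}$ and then slicing---is genuinely different from the paper's. The paper never forms the product complex; instead it subdivides only the first leg of $I_n$, produces for each $l$ a map $\psi_1^l:\sd^{r_0}(K)\to L$ that \emph{simultaneously} approximates both $h_1^{l-1}=H(\cdot,t_1^{l-1})$ and $h_1^l$ (so that consecutive $\psi_1^l,\psi_1^{l+1}$, being approximations of the same map $h_1^l$, are automatically $1$-contiguous), then forces the required identity by making $\phi_1^l$ constant on orbits of the subgroup $G=\langle (1,j)\,g\,(1,g(j)):g\in\Sigma_n,\ 1\le j\le n\rangle\le\Sigma_n$ acting on $V(\sd^{r_0}(K))$, and finally transports to the other legs via $\phi_j^l(v):=\phi_1^l((1,j)v)$. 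Your route is more geometric and treats all legs at once; the paper's route stays entirely inside $\sd^{r_0}(K)$ and never needs stars in a product. You also correctly isolate the main subtlety---that $\Phi(gv,(l/c)_j)=\Phi(v,(l/c)_{g(j)})$ is not equivariance for any honest $\Sigma_n$-action on the product---which is exactly why the paper resorts to the auxiliary group $G$.

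There is, however, one concrete gap in your contiguity step. You assert that an interior point of the prism $\sigma\times\{(l/c)_j,((l+1)/c)_j\}$ lies in every product-complex star $\st(v_i,\tau)$ with $(v_i,\tau)$ a prism vertex, but the prism is \emph{not} a simplex of the ordered product and this fails: if $\sigma=\{a<b\}$ and the edge is $\{c<d\}$, then $(a,d)$ and $(b,c)$ are incomparable in the product order, no simplex contains both, and $\st(a,d)\cap\st(b,c)=\varnothing$. The repair is immediate once noticed: run the Lebesgue-number argument with the larger sets $\st_{\sd^{r_0}(K)}(v)\times\st_{I_n^{(c)}}(\tau)$ in place of $\st(v,\tau)$ (these still have diameter below $\delta$ for $r_0,c$ large, and the analogue of your displayed identity, namely $H\bigl(\st(gv)\times\st((l/c)_j)\bigr)=H\bigl(\st(v)\times\st((l/c)_{g(j)})\bigr)$, follows directly from $H(gx,t_j)=H(x,t_{g(j)})$ since $g\cdot\st(v)=\st(gv)$). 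With that stronger star condition on $\Phi$, an interior point $(x,t)$ of the geometric prism has $x\in\bigcap_i\st(v_i)$ and $t\in\st((l/c)_j)\cap\st(((l+1)/c)_j)$, so $H(x,t)\in\bigcap\st_L(\Phi(v_i,\tau))$ and the $1$-contiguity of $\phi_j^l$ and $\phi_j^{l+1}$ follows.
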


\begin{proof}
We prove the proposition in four steps.

\textbf{Step I:} Let $H : |K| \times I_n \to |L|$ be a symmetric homotopy between $f_1, f_2, \cdots, f_n$. Since $|K|$ is compact, there are points $0_1=t_1^0 \leq t_1^1 \leq \cdots \leq t_1^c=1_1$ in the first interval of $I_n$ such that for any $x\in|K|$ the points $H(x, t_1^l)$ and  $H(x, t_1^{l-1})$ belong to open star $\st(w)$ for some vertex $w$ of $L$ and $l\in\{1, 2, \cdots, c\}$. We denote $h_j^l(x) = H(x, t_j^l)$ for each $j$ and $l$. By \cite[Theorem 3.5.6]{Spa66}, there is $r_0 \in \mathbb{N}$ (large enough) and simplicial approximation $\psi_1^l : \sd^{r_0}(K) \rightarrow L$ of $h_1^l$ and $h_1^{l-1}$ for $l\in\{1, 2, \cdots, c\}$. 

\textbf{Step II:} Using $\psi_1^l$ here we construct another simplicial approximation $\phi_1^l$ of $h_1^l$ and $h_1^{l-1}$. Let $S=\big\{(1, j) g  (1, g(j))\in \Sigma_n; \ 1\leq j \leq n, \ g\in\Sigma_n\big\}$ and $G$ be the subgroup of $\Sigma_n$ generated by $S$. Take the induced action of $\Sigma_n$ on $\sd^{r_0}(K)$. Choose and fix an element $v_0$ on each $G$-orbit of $\sd^{r_0}(K)$. Now for each $l$ we define $\phi_1^l : \sd^{r_0}(K) \rightarrow L$ as: for any vertex $v$ of $\sd^{r_0}(K)$, $\phi_1^l(v)=\psi_1^l(v_0)$ where $v_0$ is the chosen point on the $G$-orbit of $v$. We claim that $\phi_1^l$ is a simplicial approximation of both $h_1^l$ and $h_1^{l-1}$. Write $v=g'v_0$ for some $g'\in G$. Without loss of generality we can take $g'\in S$, i.e, $g'=(1, j) g  (1, g(j))$ for some $j$ and $g\in \Sigma_n$. Observe that,
\begin{align*}
h_1^l\Big(\st \big(v\big)\Big) &= h_1^l\Big(\st \big(g'v_0\big)\Big) = h_1^l\Big(\st \big( (1, j) g  (1, g(j))v_0\big)\Big)\\
&=  h_1^l\Big((1, j) g  (1, g(j)) \st (v_0)\Big)  =  h_j^l\Big(g  (1, g(j)) \st (v_0)\Big) \\
&=  h_{g(j)}^l\Big((1, g(j)) \st (v_0)\Big)  = h_{1}^l\Big(\st (v_0)\Big) \subset \st \big( \psi_1^l(v_0)\big)= \st \Big( \phi_1^l \big( v\big)\Big).
\end{align*}
\noindent The inclusion follows since $\psi_1^l$ is approximation of $h_1^l$. Similarly we can show that $h_1^{l-1}\big(\st (v)\big) \subset \st \big( \phi_1^l (v)\big)$. So $\phi_1^l$ is a simplicial approximation of both $h_1^l$ and $h_1^{l-1}$. 

\textbf{Step III:} We now use $\phi_1^l$ to define $\phi_j^l : \sd^{r_0}(K) \rightarrow L$ which is a simplicial approximations of both $h_j^l$ and $h_j^{l-1}.$  For $1\leq j \leq n$ and $1\leq l\leq c$,
$$\phi_j^l : \sd^{r_0}(K) \rightarrow L, ~~\phi_j^l(v) :=\phi_1^l\big((1, j) v\big) \text{ for } v\in V(\sd^{r_0}(K)).$$ 
Note that,
\begin{align*}
h_j^l\Big(\st (v)\Big) \cup h_j^{l-1}\Big(\st (v)\Big)&=  h_1^l\Big((1, j)\st (v)\Big) \cup h_1^{l-1}\Big((1, j)\st (v)\Big) \\
&=  h_1^l\Big(\st \big((1, j)v\big)\Big) \cup h_1^{l-1}\Big(\st \big((1, j)v\big)\Big)  \\
&\subset \st \Big( \phi_1^l\big((1, j)v\big)\Big)= \st \Big( \phi_j^l(v)\Big).
\end{align*}
So $\phi_j^l$ is a simplicial approximation of both $h_j^l$ and $h_j^{l-1}$. In particular  $\phi_j^c$ ($=\phi_j$, say) is a simplicial approximation of $h_j^c=f_j$. Moreover, for any vertex $v$ of  $\sd^{r_0}(K)$ and $g\in \Sigma_n$ we denote $u=(1, g(j))v \Leftrightarrow (1, g(j))u = v$. Then we have,
\begin{align*}
 \phi_j^l(gv) &=  \phi_j^l\big(g  (1, g(j))u\big) = \phi_1^l\big((1, j)  g  (1, g(j))u\big)  = \phi_1^l\big(u\big)  = \phi_1^l\big((1, g(j))v\big)= \phi_{g(j)}^l(v).
\end{align*}
\noindent Therefore the simplicial approximation $\phi_j^l$ of $h_j^l$ and $h_j^{l-1}$ satisfies $\phi_j^l(gv) = \phi_{g(j)}^l(v)$.

\textbf{Step IV:} Now $h_j^0=h^0 : |K| \to |L|$ is an $\Sigma_n$-equivariant map (thought $|L|$ has trivial $\Sigma_n$-action), so by  \cite[Theorem 3.49]{Pra07}, there is an $\Sigma_n$-equivariant simplicial approximation of $h_0$, $\phi_j^0=\phi^0 : \sd^{r_0}(K) \to L$, for $r_0$ large enough. So $\phi^0(gv)=\phi^0(v)$ for any vertex $v$ of $ \sd^{r_0}(K)$ and $g\in \Sigma_n$. We set $\phi_*=\phi^0$. Note that the simplicial maps  $\phi_j^{l}$ and $\phi_j^{l+1}$ are $1$-contiguous (since these are approximations of same map $h_j^l$),  for $l\in\{0, 1, \cdots, c-1\}$ and $j\in\{1, 2, \cdots, n\}$. So the simplicial approximations $\phi_1, \phi_2,\cdots,\phi_n : \sd^{r_0}(K) \rightarrow L$ of  $f_1, f_2,\cdots,f_n$ respectively, are symmetrically contiguous.
\end{proof}
\end{mysubsection}

\begin{mysubsection}{Simplicial complexity} 
Here we recall simplicial complexity $\scn(K)$ of a simplicial complex $K$ from (\cite{Pau19}). Choose a simplicial approximation $\iota_{ K^n}: \sd(K^n) \rightarrow  K^n$ of the identity on $|K^n|=|K|^n$. We denote
 \begin{equation}
 \iota^{r}_{K^n}:\sd^{r}(K^n) \rightarrow  K^n
 \end{equation} 
as the iterated composition
\begin{center}
$ \sd^r(K^n) \xrightarrow {\iota_{\sd^{r-1}(K^n)}} \sd^{r-1}(K^n) \xrightarrow {\iota_{\sd^{r-2}(K^n)}} \cdots   \xrightarrow {\iota_{\sd(K^n)}} \sd(K^n) \xrightarrow {\iota_{K^n}} K^n$
\end{center}
and $p_j\circ \iota^{r}_{K^n}= \pi_{j} : \sd^{r}(K^n) \rightarrow K$  where  $p_j: K^n \to K$  is the $j^{th}$ projection $r\geq 0, ~1\leq j\leq n$. Then $\SC^{r}_n(K)$ is the smallest non-negative integer $k$ such that there exist subcomplexes $\{L_i\}_{i=1}^{k}$ covering $\sd^{r}(K^n)$ and the restrictions $\pi_j : L_i \rightarrow K$, for $j = 1, 2,\cdots,n,$ lie in the same contiguity class, for each $i$. If no such $k$ exists then $\SC^{r}_n(K)=\infty$.
The value $\SC^{r}_n(K)$ independent of the chosen of approximation $\iota^{r}_{K^n}:\sd^{r}(K^n) \rightarrow  K^n $ of the identity on $|K|^n$. It is also shown that $\{\SC_n^r(K)\}_r$ is a decreasing sequence and bounded below by $1$. So we define
 the $n$-th \emph{simplicial complexity} as $\scn(K) :=  \min_{r \geq 0}\{\SC_n^{r}(K)\}.$
 Following theorem relates simplicial complexity and topological complexity.
\begin{theorem}[\cite{Pau19}]
For a finite simplicial complex $K$,  $\SC_n(K) = \TC_{n}(|K|), n \geq 2$.
\end{theorem}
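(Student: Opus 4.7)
The plan is to establish the two inequalities $\tcn(|K|) \leq \scn(K)$ and $\scn(K) \leq \tcn(|K|)$ separately, using the canonical homeomorphism $|\sd^{r}(K^{n})| \cong |K|^{n}$ throughout and translating between contiguity of simplicial maps and homotopy of their realizations via the non-equivariant forms of Lemma \ref{lemma contiguity to homotopy} and Proposition \ref{prop homotopy to contiguity}, which are already available in the literature.

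For $\tcn(|K|) \leq \scn(K)$, fix $r$ realizing $\SC^{r}_{n}(K) = \scn(K) = k$ and a cover of $\sd^{r}(K^{n})$ by subcomplexes $L_{1}, \dots, L_{k}$ on which all the projections $\pi_{j} : L_{i} \to K$ lie in a single contiguity class. By the non-symmetric half of Lemma \ref{lemma contiguity to homotopy}, the realizations $|\pi_{j}|$ on $|L_{i}|$ are pairwise homotopic, and such a common homotopy assembles into a continuous section of $e_{n}$ over the compact polyhedron $|L_{i}| \subset |K|^{n}$. Setting $G_{i} := |L_{i}| \setminus \bigcup_{j<i} |L_{j}|$ produces a splitting of $|K|^{n}$ into locally compact pieces (each locally closed in a compact Hausdorff space) with continuous sections, so the non-equivariant version of Proposition \ref{prop locally complact} yields $\tcn(|K|) \leq k$.

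For the reverse inequality, start with $k = \tcn(|K|)$ and an open cover $\{U_{i}\}_{i=1}^{k}$ of $|K|^{n}$ together with continuous sections $s_{i}$ of $e_{n}$; each $s_{i}$ corresponds to a continuous homotopy $H_{i} : U_{i} \times I_{n} \to |K|$ of the topological projections $p_{j}|_{U_{i}}$. Choose a closed shrinking $F_{i} \subset U_{i}$ still covering $|K|^{n}$. By the Lebesgue number lemma, for $r$ large enough every closed simplex of $\sd^{r}(K^{n})$ meeting $F_{i}$ is contained in $U_{i}$; let $L_{i}$ be the subcomplex consisting of those simplices whose realization lies in $U_{i}$, so $\{L_{i}\}$ still covers $\sd^{r}(K^{n})$. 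Applying Proposition \ref{prop homotopy to contiguity} (non-symmetric version) to $H_{i}$ on $|L_{i}|$ produces, after a further iterated subdivision $\sd^{r'}$, simplicial approximations $\phi_{i,j}$ of $p_{j}|_{|L_{i}|}$ lying in a single contiguity class. Since the naturally associated projections $\pi_{j} \circ \iota^{r+r'}$ restricted to $\sd^{r'}(L_{i})$ are themselves simplicial approximations of the same continuous maps, and since any two simplicial approximations of a given continuous map are $1$-contiguous, the canonical projections lie in the same contiguity class as the $\phi_{i,j}$ and therefore among themselves. This gives $\SC^{r+r'}_{n}(K) \leq k$ and hence $\scn(K) \leq k$.

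The most delicate step is the second direction: one must arrange, after enough iterated subdivision, that the open sets $U_{i}$ are replaced by honest subcomplex covers, and that the contiguity property is transported from the auxiliary approximation produced by Proposition \ref{prop homotopy to contiguity} to the canonically occurring projections $\pi_{j} \circ \iota^{r+r'}$. This transport works only because simplicial approximations of the same continuous map are automatically $1$-contiguous; this small but essential observation converts an existence-of-approximation statement into the required statement about the canonical simplicial projections used to define $\SC^{r}_{n}(K)$.
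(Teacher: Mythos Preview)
The paper does not itself prove this statement---it is quoted from \cite{Pau19}---but it does prove the symmetric analogue (Lemma \ref{lemma tc sc} and Theorem \ref{thm sym sc sym tc equal}), and your argument is precisely the non-equivariant specialization of that proof: contiguity $\Rightarrow$ homotopy via Lemma \ref{lemma contiguity to homotopy} followed by the splitting $G_i=|L_i|\setminus\bigcup_{j<i}|L_j|$ and Proposition \ref{prop locally complact} for one direction, and the Lebesgue-number refinement plus Proposition \ref{prop homotopy to contiguity} for the other.

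Two small comments. In the first direction you pass directly from ``the $|\pi_j|$ are homotopic'' to ``there is a section of $e_n$ over $|L_i|$''; strictly speaking $|\pi_j|$ is only a simplicial approximation of the topological projection $p_j$, so (as in the paper's proof of Lemma \ref{lemma tc sc}) one should insert the straight-line homotopy $t\,p_j(x)+(1-t)\,|\pi_j|(x)$ to make the endpoints agree with $p_j$. In the second direction the closed shrinking $F_i$ is unnecessary: the Lebesgue number already guarantees that for $r$ large every simplex of $\sd^r(K^n)$ lies in some $U_i$, so the $L_i$ cover without further argument. Your explicit observation that the canonical $\pi_j$ and the approximations produced by Proposition \ref{prop homotopy to contiguity} are $1$-contiguous (being approximations of the same map) is a point the paper's own proof of Theorem \ref{thm sym sc sym tc equal} leaves implicit, so you are in fact slightly more careful there.
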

			
\end{mysubsection}
		
\begin{mysubsection}{Symmetric simplicial complexity}
 Let $K$ be a simplicial complex. Then $K^n$ is a simplicial $\Sigma_n$-complex (See Example \ref{exprod}). A subcomplex $L$ of $K^n$ is called symmetric if $gL=L$ for all $g\in \Sigma_n$. In this case, $|L|$ of is $\Sigma_n$-invariant. To define symmetric simplicial complexity, we choose an $\Sigma_n$-approximation $\iota_{K^n} : \sd(K^n) \to K^n$ of identity on $|K^n|$. Such an approximation exists by Lemma \ref{lemma approx of identity}. As in previous case $\pi_{j} : \sd^{r}(K^n) \rightarrow K$ denotes the composition of $p_j\circ \iota^{r}_{K^n}.$ 
\begin{definition}
For a simplicial complex $K$ and integer $r\geq 0$, let $\scnsr(K)$ be the smallest non-negative integer $k$ such that there exist symmetric subcomplexes $\{L_i\}_{i=1}^{k}$ covering $\sd^{r}(K^n)$ and the restrictions $\pi_j|_{L_i} : L_i \rightarrow K$ $1\leq j \leq n,$ are symmetrically contiguous on each $L_i$. If no such $k$ exists then $\scnsr(K)=\infty$.
\end{definition}
\noindent Note that the maps $\pi_j$ depend on the choice of an $\Sigma_n$-approximation of identity. The following lemma shows that the above definition is independent of such a choice. 
\begin{lemma}
The value $\scnsr(K)$ is independent of the chosen of $\Sigma_n$-approximation $\iota_{K^n}^r : \sd^{r}( K^n) \rightarrow K^n$ of the identity on $|K|^n$.
\end{lemma}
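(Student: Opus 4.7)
The plan is to show that any symmetric cover witnessing $\scnsr(K)$ for one choice of $\Sigma_n$-approximation also works for any other, so that the two values coincide by symmetry. Let $\iota^{r}_{K^n}$ and $(\iota')^{r}_{K^n}$ from $\sd^{r}(K^n)$ to $K^n$ be two iterated $\Sigma_n$-approximations of the identity on $|K|^n$, and write $\pi_j = p_j \circ \iota^{r}_{K^n}$ and $\pi'_j = p_j \circ (\iota')^{r}_{K^n}$.

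The key observation is that $\iota^{r}_{K^n}$ and $(\iota')^{r}_{K^n}$ are themselves $\Sigma_n$-equivariant simplicial approximations of the identity on $|K|^n$: composition of $\Sigma_n$-equivariant maps is $\Sigma_n$-equivariant, and the standard star-nesting $\st(w) \subset \st(\iota'(w)) \subset \st(\iota\iota'(w))$ shows that a composition of simplicial approximations of the identity is again one. Since any two simplicial approximations of the same continuous map are $1$-contiguous, $\iota^{r}_{K^n}(\sigma) \cup (\iota')^{r}_{K^n}(\sigma)$ is a simplex of $K^n$ for every simplex $\sigma$ of $\sd^{r}(K^n)$, and applying $p_j$ yields that $\pi_j(\sigma) \cup \pi'_j(\sigma)$ is a simplex of $K$. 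Furthermore, the $\Sigma_n$-equivariance of $(\iota')^{r}_{K^n}$ together with the identity $p_j(gx) = p_{g(j)}(x)$ on $K^n$ gives $\pi'_j(gv) = \pi'_{g(j)}(v)$ for every vertex $v$ and $g \in \Sigma_n$.

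Now suppose a cover $\{L_i\}_{i=1}^{k}$ of $\sd^{r}(K^n)$ by symmetric subcomplexes witnesses $\scnsr(K)$ using $\iota^{r}_{K^n}$, realized on each $L_i$ by a simplicial map $\phi_{*}^{(i)}$ with $\phi_{*}^{(i)}(gv) = \phi_{*}^{(i)}(v)$ and length-$c$ sequences of intermediate maps $\phi^{l}_{j}$ joining $\phi_{*}^{(i)}$ to $\pi_j|_{L_i}$ and satisfying $\phi^{l}_{j}(gv) = \phi^{l}_{g(j)}(v)$. I would then simply append one extra term $\phi^{c+1}_{j} := \pi'_j|_{L_i}$ to each of these sequences. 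The required $1$-contiguity between $\phi^{c}_{j} = \pi_j|_{L_i}$ and the new term was established in the preceding paragraph, and the necessary equivariance $\phi^{c+1}_{j}(gv) = \phi^{c+1}_{g(j)}(v)$ is exactly the identity $\pi'_j(gv) = \pi'_{g(j)}(v)$ just verified. Hence the same $\phi_{*}^{(i)}$ witnesses symmetric $(c+1)$-contiguity of the family $\{\pi'_j|_{L_i}\}$, so the cover $\{L_i\}$ also computes $\scnsr(K)$ for the choice $(\iota')^{r}_{K^n}$. Swapping the roles of $\iota$ and $\iota'$ gives the claimed equality, and I expect no genuine obstacle beyond the routine star-nesting check noted above.
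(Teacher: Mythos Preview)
Your proof is correct and follows essentially the same strategy as the paper: append one more step to the symmetric contiguity chain using the $1$-contiguity of any two simplicial approximations of the identity, together with the equivariance check $\pi'_j(gv)=\pi'_{g(j)}(v)$ coming from the $\Sigma_n$-equivariance of $(\iota')^{r}_{K^n}$. The only cosmetic difference is that the paper changes the approximation at a single intermediate level $r'$ (and then varies $r'$), whereas you compare the two full iterated compositions directly; this is a mild streamlining of the same argument.
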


\begin{proof}				
Let $r'$ be any number such that $1\leq r' \leq r$. We fix iterated compositions of $\Sigma_n$-approximations $\iota^{r'-1}_{ K^n} : \sd^{r'-1}(K^n) \rightarrow  K^n $ and $\iota^{r-r'}_{\sd^{r'}(K^n)} : \sd^{r}(K^n) \rightarrow  \sd^{r'}(K^n) $ of the identity on $|K|^n$. Now we take two $\Sigma_n$-approximations $\iota_{\sd^{r'-1}( K^n)}, \bar{\iota}_{\sd^{r'-1}( K^n)} : \sd^{r'}(K^n) \rightarrow \sd^{r'-1}( K^n)$ of the identity on $|K|^n$. Let $\pi_{j} , \bar{\pi}_j: \sd^{r}(K^n) \rightarrow K$ be the compositions $p_j \circ \iota^{r'-1}_{ K^n} \circ \iota_{\sd^{r'-1}( K^n)} \circ \iota^{r-r'}_{\sd^{r'}(K^n)}$ and $p_j \circ \iota^{r'-1}_{ K^n} \circ \bar{\iota}_{\sd^{r'-1}( K^n)} \circ \iota^{r-r'}_{\sd^{r'}(K^n)}$ respectively, where $p_j : K^n \to K$ is the $j$-th projection. Let $\pi_j : L \to K$ for $j\in \{1, 2, \cdots, n\}$ be symmetrically contiguous by the contiguity chain $\pi_*=\pi^0_j, \pi^1_j, \cdots, \pi^c_j=\pi_j: L \to K$, on some symmetric subcomplex $L$ of $\sd^{r}(K^n)$. Since $\iota_{\sd^{r'-1}( K^n)}, \bar{\iota}_{\sd^{r'-1}( K^n)} : \sd^{r'}(K^n) \rightarrow \sd^{r'-1}( K^n)$ both are approximation of identity on $|K|^n$, so they are $1$-contiguous and hence $\pi_j, \bar{\pi}_j : L \to K$ are $1$-contiguous. So $\pi_*=\pi^0_j, \pi^1_j, \cdots, \pi^c_j=\pi_j, \bar{\pi}_j: L \to K$ is a contiguity chain on the subcomplex $L$. Now since $\iota^{r'-1}_{ K^n}$, $\iota^{r-r'}_{\sd^{r'}(K^n)}$ and $\bar{\iota}_{\sd^{r'-1}( K^n)}$ are $\Sigma_n$-simplicial maps, for any vertex $v$ of $L$ and any $g\in\Sigma_n$ we have
\begin{align*}
\bar{\pi}_j(gv) &=  p_j \circ \iota^{r'-1}_{ K^n} \circ \bar{\iota}_{\sd^{r'-1}( K^n)} \circ \iota^{r-r'}_{\sd^{r'}(K^n)}\big(gv )  
= p_j  \big(g\big(\iota^{r'-1}_{ K^n} \circ \bar{\iota}_{\sd^{r'-1}( K^n)} \circ \iota^{r-r'}_{\sd^{r'}(K^n)}(v)\big)\big)\\
&=  p_{g(j)}  \big(\iota^{r'-1}_{ K^n} \circ \bar{\iota}_{\sd^{r'-1}( K^n)} \circ \iota^{r-r'}_{\sd^{r'}(K^n)}(v)\big) 
=  p_{g(j)} \circ \iota^{r'-1}_{ K^n} \circ \bar{\iota}_{\sd^{r'-1}( K^n)} \circ \iota^{r-r'}_{\sd^{r'}(K^n)}(v) \\
& = \bar{\pi}_{g(j)}(v).
\end{align*}
So $\bar{\pi}_j : L \to K$ for $j\in \{1, 2, \cdots, n\}$ are symmetrically contiguous by the contiguity chain $\pi_*=\pi_j^0, \pi^1_j, \cdots, \pi^c_j, \pi_j^{c+1}= \bar{\pi}_j : L \to K$, on the subcomplex $L$ of $\sd^{r}(K^n)$. Similarly we can show that if $\bar{\pi}_j$'s are symmetrically contiguous then $\pi_j$'s are so. This is true for any $r'$ between $1$ and $r$. Hence $\scnsr(K)$ is independent of the chosen of $\Sigma_n$-approximation $\iota_{K^n}^r : \sd^{r}( K^n) \rightarrow K^n$ of the identity on $|K|^n$.
\end{proof}
			
 Next we show that $\scnsr(K)$ is bounded below by $\tcns(|K|)$.
 
\begin{lemma}\label{lemma tc sc}
For a simplicial complex $K$, $\tcns(|K|) \leq \scnsr(K)$,  $r\geq 0,~n \geq 2$.
\end{lemma}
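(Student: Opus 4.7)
Let $k = \scnsr(K)$ and fix symmetric subcomplexes $L_1, \ldots, L_k$ covering $\sd^r(K^n)$ on which $\pi_1|_{L_i}, \ldots, \pi_n|_{L_i}$ are symmetrically contiguous. The plan is to convert this data into a symmetric open cover of $|K|^n$ witnessing $\tcns(|K|)\leq k$ via Remark \ref{remark sym tc definition}.

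First, I would apply Lemma \ref{lemma contiguity to homotopy} on each $L_i$ to obtain a symmetric homotopy between the realizations $|\pi_j||_{|L_i|} : |L_i| \to |K|$. To replace $|\pi_j|$ by the actual projection $|p_j|$, I would use that $|\iota^r_{K^n}|$ is a $\Sigma_n$-equivariant simplicial approximation of the identity on $|K|^n$, so the pointwise linear interpolation $J(x, t) = (1-t)|\iota^r_{K^n}|(x) + t\,x$ inside each carrier simplex gives a $\Sigma_n$-equivariant homotopy from $|\iota^r_{K^n}|$ to $\Id_{|K|^n}$. Splicing this (composed with each $|p_j|$) with the symmetric homotopy between the $|\pi_j|$'s on the wedge intervals of $I_n$ produces a symmetric homotopy on $|L_i|$ between the projections $|p_j||_{|L_i|}$; the equivariance condition $H(gx,t_j)=H(x,t_{g(j)})$ of Definition \ref{dsymho} is preserved because $J$ is $\Sigma_n$-equivariant and $|p_j|(gx) = |p_{g(j)}|(x)$.

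Next I would open up each $|L_i|$. Since $|L_i|$ is the realization of a symmetric subcomplex, $(|K|^n, |L_i|)$ is a $\Sigma_n$-equivariant CW pair, so equivariant CW pair theory provides a $\Sigma_n$-invariant open neighborhood $U_i \subset |K|^n$ of $|L_i|$ together with a $\Sigma_n$-equivariant strong deformation retraction $R_i : U_i \times I \to U_i$ onto $|L_i|$. The family $\{U_i\}_{i=1}^k$ is then a symmetric open cover of $|K|^n$. To transport the symmetric homotopy from $|L_i|$ to $U_i$, I would first push $x\in U_i$ into $|L_i|$ using $R_i$ on the second half of each wedge interval $[0,1]_j \subset I_n$, and then apply the symmetric homotopy on $|L_i|$ on the first half. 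Because $R_i$ is $\Sigma_n$-equivariant and $|p_j|(gx)=|p_{g(j)}|(x)$, the resulting map $U_i \times I_n \to |K|$ is a symmetric homotopy between the projections $|p_j||_{U_i}$. Remark \ref{remark sym tc definition} then yields $\tcns(|K|) \leq k = \scnsr(K)$.

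The main obstacle is this equivariant-neighborhood step: one must ensure the neighborhood $U_i$ is \emph{$\Sigma_n$-invariant} and admits a \emph{$\Sigma_n$-equivariant} deformation retraction onto $|L_i|$, and then verify that the splicing preserves the wedge-permutation equivariance of Definition \ref{dsymho}. The existence of such $U_i$ and $R_i$ is a standard consequence of equivariant simplicial neighborhood theory applied in the barycentric subdivision; the splicing respects equivariance because the deformation retraction parameter is an ordinary $I$-factor, disjoint from the permuted wedge coordinates of $I_n$.
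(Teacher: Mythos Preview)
Your proof is correct, and the first three steps---realizing the symmetric contiguity via Lemma \ref{lemma contiguity to homotopy}, then replacing $|\pi_j|$ by the genuine projections $p_j$ via the straight-line homotopy coming from simplicial approximation---exactly match the paper's argument. The divergence is only in the final step. The paper does not thicken the $|L_i|$ to open sets at all: instead it sets $G_1 = |L_1|$ and $G_i = |L_i| \setminus (|L_1|\cup\cdots\cup |L_{i-1}|)$, obtaining a partition of $|K|^n$ into locally compact $\Sigma_n$-invariant pieces on which the $p_j$ are symmetrically homotopic, and then invokes Corollary \ref{prop homotopy} (the ENR characterization of $\tcns$) to conclude. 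Your route instead passes to $\Sigma_n$-invariant open neighborhoods with equivariant deformation retractions and applies Remark \ref{remark sym tc definition} directly. Both arguments are valid; the paper's is a bit more economical since Corollary \ref{prop homotopy} is already in hand, whereas yours requires the (standard but extra) equivariant simplicial neighborhood construction.
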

			
\begin{proof}				
Let $\scnsr(K) = k$. Let us consider symmetric subcomplexes
$\{L_i\}_{i=1}^{k}$ covering $\sd^{r}(K^n)$ such that the restrictions $\pi_j :L_i \rightarrow K$, for $j = 1, 2,\cdots,n,$ are symmetrically contiguous, for each $i$. Now we apply geometric realization functor. By Lemma \ref{lemma contiguity to homotopy} we get $|\pi_j| : |L_i| \rightarrow |K|$, for $j = 1, 2,\cdots,n,$ are symmetrically homotopic for each $i$. Let $H: |L_i| \times I_n \to K$ be a symmetric homotopy between $\pi_1, \pi_2, \cdots, \pi_n$. We restrict the map $H$ on $|L_i| \times j$-th interval of $I_n$ and denote it by $h_j$. So $h_j : |L_i| \times I \to K$. For each $j$ we define an another map $f_j : |L_i| \times I \to K$ by $f_j(x, t) = t  p_j(x) + (1-t)(|\pi_j|(x))$, where $p_j$ is the composition $|L_i|\hookrightarrow |K|^n \to K$. Now consider the homotopy $F: |L_i| \times I_n \to K$ is defined by: for each $x$, the path $F(x, t_j)$ is the concatenation $(h_j*f_j)(x, t)$,
$$
(h_j*f_j)(x, t) = 
\begin{cases} 
h_j(x, 2t) & \mbox{ if } ~~ t \in [0, \frac{1}{2}]\\
f_j(x, 2t-1) &\mbox{if } ~~ t \in [\frac{1}{2}, 1].
\end{cases}
$$ 

By assumption $h_j(gx, t)= 	h_{g(j)}(x, t)$ and by definition of $f_j, f_j(gx, t)= 	f_{g(j)}(x, t)$ for any $g\in \Sigma_n$ and $F(x, 1_j)=p_j(x)$. So $F$ is a symmetric homotopy and $p_1, p_2, \cdots, p_n$ are symmetrically homotopic on $|L_i|$. Now we set $G_1 = |L_1|$ and $G_i = |L_i| - (|L_1|\cup |L_2| \cup \cdots \cup |L_{i-1}|)$ for $i\geq2$. Then each $G_i$ is locally compact, $\Sigma_n$-invariant and $|K^n| = \sqcup_i G_i$. The restrictions of $p_j$, for $j = 1, 2,\cdots,n,$ on each $G_i$ are also symmetrically homotopic. From Corollary \ref{prop homotopy}, we have $\tcns(|K|) \leq k$. So $\tcns(|K|) \leq \scnsr(K)$, for any $r\geq 0$ and $n \geq 2$.			
\end{proof}

As in the case of simplicial complexity,  now we show that $\{\scnsr(K)\}_r$ is a decreasing sequence.
\begin{lemma}
For a simplicial complex $K$, $\scnsr(K) \geq \SC_{n}^{\Sigma, r+1}(K)$,  $r\geq 0, ~~n \geq 2$.
\end{lemma}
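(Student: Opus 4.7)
The plan is to show that any cover realizing $\scnsr(K)$ pulls back via a subdivision to a cover realizing a bound on $\SC_n^{\Sigma, r+1}(K)$ of the same size. Concretely, let $\scnsr(K) = k$ and let $\{L_i\}_{i=1}^k$ be symmetric subcomplexes of $\sd^r(K^n)$ covering $\sd^r(K^n)$ on which the restrictions $\pi_j|_{L_i}$ are symmetrically contiguous. First I would consider the subcomplexes $\sd(L_i) \subset \sd^{r+1}(K^n)$. Since barycentric subdivision commutes with taking subcomplexes and unions, the family $\{\sd(L_i)\}$ covers $\sd^{r+1}(K^n)$; and since each $L_i$ is $\Sigma_n$-invariant, so is $\sd(L_i)$ (the induced $\Sigma_n$-action on $\sd(K^n)$ permutes the simplices of $L_i$, hence its vertices in $\sd(L_i)$).

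Next I would relate the maps $\pi_j$ at levels $r$ and $r+1$. By construction $\iota^{r+1}_{K^n} = \iota^r_{K^n} \circ \iota_{\sd^r(K^n)}$, so for the level-$(r+1)$ structural maps one has $\pi_j^{r+1} = \pi_j^r \circ \iota_{\sd^r(K^n)}$. Because $\iota_{\sd^r(K^n)}$ is a $\Sigma_n$-approximation of the identity (chosen as in Lemma \ref{lemma approx of identity}), it restricts to a $\Sigma_n$-equivariant simplicial map $\sd(L_i) \to L_i$: any vertex of $\sd(L_i)$ is a simplex of $L_i$, and $\iota$ sends it to one of its own vertices, which still lies in $L_i$. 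Thus on each $\sd(L_i)$ the maps $\pi_j^{r+1}|_{\sd(L_i)}$ factor as
\[
\sd(L_i) \xrightarrow{\,\iota_{\sd^r(K^n)}|_{\sd(L_i)}\,} L_i \xrightarrow{\,\pi_j|_{L_i}\,} K.
\]

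Then I would verify that precomposing the given symmetric contiguity chain on $L_i$ with this $\Sigma_n$-equivariant map $\iota$ yields a symmetric contiguity chain on $\sd(L_i)$. If $\pi_* = \pi_j^0, \pi_j^1, \ldots, \pi_j^c = \pi_j|_{L_i}$ is the data witnessing symmetric contiguity on $L_i$, with $\pi_j^l(gv) = \pi_{g(j)}^l(v)$ and $\pi_*(gv) = \pi_*(v)$, then the composites $\pi_j^l \circ \iota$ form a contiguity chain of the same length between $\pi_* \circ \iota$ and $\pi_j^{r+1}|_{\sd(L_i)}$ (contiguity is preserved under simplicial pre- and post-composition, as recalled in the preceding subsection). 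Equivariance of $\iota$ gives
\[
(\pi_j^l \circ \iota)(gw) = \pi_j^l(g \cdot \iota(w)) = \pi_{g(j)}^l(\iota(w)) = (\pi_{g(j)}^l \circ \iota)(w),
\]
and similarly $(\pi_* \circ \iota)(gw) = \pi_*(\iota(w)) = (\pi_* \circ \iota)(w)$, so the new chain is indeed symmetric. Hence $\{\sd(L_i)\}_{i=1}^k$ witnesses $\SC_n^{\Sigma, r+1}(K) \leq k = \scnsr(K)$.

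The main obstacle is the bookkeeping at the last step: one must check that the $\Sigma_n$-equivariance of $\iota_{\sd^r(K^n)}$ genuinely transports the two equivariance conditions in the definition of symmetric contiguity (namely the condition on intermediate maps and the invariance of $\phi_*$) without disturbing the length $c$ of the contiguity chain. Once one observes that composition with an equivariant simplicial map is a trivial operation on chains, the argument is essentially a formal consequence of the naturality of $\sd$ and the equivariant choice of approximation, paralleling the non-symmetric monotonicity proof in \cite{Pau19}.
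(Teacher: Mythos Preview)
Your proposal is correct and follows essentially the same approach as the paper: take the subdivided subcomplexes $\sd(L_i)$, precompose the given symmetric contiguity chain with the $\Sigma_n$-equivariant approximation $\iota_{\sd^r(K^n)}$, and verify that equivariance of $\iota$ transports the two symmetry conditions on $\pi_*$ and the intermediate maps $\pi_j^l$. The paper's proof is slightly terser but identical in structure and content.
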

\begin{proof}	
Let $\scnsr(K) = k$	and $\{L_i\}_{i=1}^{k}$ be a symmetric subcomplexes covering $\sd^{r}(K^n)$ such that the restrictions $\pi_j : L_i \rightarrow K$ $1\leq j\leq n,$ are symmetrically contiguous, for each $i$. So there is a contiguity chain $\pi_*=\pi^0_j, \pi^1_j, \cdots, \pi^c_j=\pi_j: L_i \to K$ such that $\pi_j^l(gv) = \pi_{g(j)}^l(v)$ and $\pi_*(gv) = \pi_*(v)$ $v \in V(L_i)$,  $l \in \{ 1, 2,\cdots, c\}$. Take the subcomplexes $\{\sd(L_i)\}_{i=1}^{k}$ which are symmetric and covers $\sd^{r+1}(K^n)$. We choose a $\Sigma_n$-approximation $\iota_{\sd^{r}( K^n)} :\sd^{r+1}(K^n) \rightarrow \sd^{r}( K^n)$ of the identity on $|K|^n$. Clearly, for each $L_i$, the sequence of maps $\pi_* \circ \iota_{\sd^{r}( K^n)} =\pi^0_j \circ \iota_{\sd^{r}( K^n)}, \pi^1_j \circ \iota_{\sd^{r}( K^n)}, \cdots, \pi^c_j \circ \iota_{\sd^{r}( K^n)} = \pi_j \circ \iota_{\sd^{r}( K^n)} : \sd(L_i) \to K$ for $j \in \{ 1, 2,\cdots, n\}$ give a contiguity chain. Since $\iota_{\sd^{r}( K^n)}$ is a $\Sigma_n$-map, for any vertex $v$ of $\sd(L_i)$, $l \in \{ 1, 2,\cdots, c\}$ we have,
$$\pi_* \circ \iota_{\sd^{r}( K^n)}(gv) =\pi_* (g\iota_{\sd^{r}( K^n)}(v)) =\pi_* (\iota_{\sd^{r}( K^n)}(v)) = \pi_* \circ \iota_{\sd^{r}( K^n)}(v), $$
$$\pi_j^l \circ \iota_{\sd^{r}( K^n)}(gv) = \pi_j^l (g \iota_{\sd^{r}( K^n)}(v)) = \pi_{g(j)}^l (\iota_{\sd^{r}( K^n)}(v)) = \pi_{g(j)}^l \circ \iota_{\sd^{r}( K^n)}(v).$$

\noindent This implies that the restrictions $\pi_j \circ \iota_{\sd^{r}( K^n)} : \sd(L_i) \rightarrow K$, for $j \in \{ 1, 2,\cdots, n\}$ are symmetrically contiguous, for each $i$. So  $\SC_{n}^{\Sigma, r+1} \leq k$ and therefore $\scnsr(K) \geq \SC_{n}^{\Sigma, r+1}(K)$. 
\end{proof}
			
 Above Lemma allows us to make the following definition.
\begin{definition}
For a simplicial complex $K$, the \emph{$n$-th symmetric simplicial complexity} or simply \emph{symmetric simplicial complexity} is defined as: $$\scns(K) = \min_{r \geq 0}\{\scnsr(K)\}.$$
\end{definition}
			
 The main theorem of this section is the following.
			
\begin{theorem}\label{thm sym sc sym tc equal}
For a finite simplicial complex $K$, $\scns(K) = \tcns(|K|)$,  $n \geq 2$.
\end{theorem}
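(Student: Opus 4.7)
The inequality $\tcns(|K|) \leq \scns(K)$ follows by taking the minimum over $r$ in Lemma \ref{lemma tc sc}, so the task reduces to showing $\scns(K) \leq \tcns(|K|)$. Setting $k := \tcns(|K|)$, I would begin by invoking Remark \ref{remark sym tc definition} to fix a cover of $|K|^n = |K^n|$ by $\Sigma_n$-invariant open sets $U_1, \ldots, U_k$ on which the projections $p_j : U_i \hookrightarrow |K^n| \to |K|$ are symmetrically homotopic for $1 \leq j \leq n$.

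The first main step converts this open cover into a symmetric subcomplex cover on a sufficiently fine subdivision. Since $K$ is finite, $|K^n|$ is a compact polyhedron, so $\{U_i\}$ admits a Lebesgue number $\delta > 0$, and for $r_1$ large every closed simplex of $\sd^{r_1}(K^n)$ has diameter less than $\delta$ and hence lies in some $U_i$. I define $L_i^{r_1}$ to be the subcomplex of $\sd^{r_1}(K^n)$ whose simplices $\sigma$ satisfy $|\sigma| \subset U_i$; the $\Sigma_n$-invariance of $U_i$ together with the simpliciality of the action makes $L_i^{r_1}$ a symmetric subcomplex, and the Lebesgue bound ensures that the $L_i^{r_1}$ cover $\sd^{r_1}(K^n)$. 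On each $L_i^{r_1}$ the symmetric homotopy on $U_i$ restricts to a symmetric homotopy between the maps $p_j|_{|L_i^{r_1}|}$, so by Proposition \ref{prop homotopy to contiguity} there is an $r_2^{(i)}$ and symmetrically contiguous simplicial approximations $\phi_j^{(i)} : \sd^{r_2^{(i)}}(L_i^{r_1}) \to K$ of these restricted projections. Taking $r := r_1 + \max_i r_2^{(i)}$ and $L_i := \sd^{r-r_1}(L_i^{r_1})$, and composing with further $\Sigma_n$-equivariant approximations $\iota$ where needed to bring every $\phi_j^{(i)}$ onto the common level $L_i$, one obtains symmetric subcomplexes covering $\sd^r(K^n)$ on which the $\phi_j^{(i)}$'s remain symmetrically contiguous.

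The final step is to replace $\phi_j^{(i)}$ by the maps $\pi_j|_{L_i}$ appearing in the definition of $\scnsr(K)$. Because $\iota_{K^n}$ is $\Sigma_n$-equivariant by Lemma \ref{lemma approx of identity}, the map $\pi_j = p_j \circ \iota^r_{K^n}$ satisfies $\pi_j(gv) = \pi_{g(j)}(v)$ and is itself a simplicial approximation of $p_j$; hence both $\pi_j|_{L_i}$ and $\phi_j^{(i)}$ are simplicial approximations of the same map $p_j|_{|L_i|}$ and so are $1$-contiguous. Appending $\pi_j|_{L_i}$ as one extra step to the symmetric contiguity chain produced by Proposition \ref{prop homotopy to contiguity} for the $\phi_j^{(i)}$'s yields a symmetric contiguity chain with the same common invariant map, showing that the $\pi_j|_{L_i}$'s are symmetrically contiguous. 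This gives $\scnsr(K) \leq k$, hence $\scns(K) \leq k = \tcns(|K|)$. The main obstacle is the construction in the second paragraph: one must verify that $L_i^{r_1}$ is simultaneously a genuine subcomplex, $\Sigma_n$-invariant, and covering. After that, the remaining work is careful bookkeeping to see that the symmetry relations $\phi_j^l(gv) = \phi_{g(j)}^l(v)$ survive every operation performed --- restriction to a subcomplex, composition with the $\Sigma_n$-equivariant $\iota$, and appending a single $1$-contiguous step to an existing chain.
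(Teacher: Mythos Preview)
Your proof is correct and follows essentially the same route as the paper: both use Lemma~\ref{lemma tc sc} for one direction, then the Lebesgue-number argument to pass from the symmetric open cover of $|K|^n$ to a symmetric subcomplex cover of a fine subdivision, and finally Proposition~\ref{prop homotopy to contiguity} to convert the symmetric homotopies into symmetric contiguities. Your final paragraph, verifying that the approximations produced by Proposition~\ref{prop homotopy to contiguity} can be replaced by the canonical maps $\pi_j = p_j\circ\iota^r_{K^n}$ via a single $1$-contiguous step, makes explicit a point the paper leaves tacit.
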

			
\begin{proof}				
From Lemma \ref{lemma tc sc}, it is clear that $\scns(K) \geq \tcns(|K|)$. Now we prove the other inequality. Let $\tcns(|K|) = k$. Using Remark \ref{remark sym tc definition} we get an symmetric open cover $\{U_i\}_{i=1}^k$ of $|K|^n$  such that the composition maps $p_1, p_2, \cdots, p_n : U_i \hookrightarrow |K|^n \to |K|$ are symmetrically homotopic for each $i$. Since $K$ is finite, $|K|$ is compact and so is $|K|^n = |K^n|$. Therefore by Lebesgue lemma of compact metric spaces, there exists $\delta >0$ such that any set of diameter less than $\delta$ is contained in one of the open sets $U_i$.  Since, with increasing subdivision, the diameter of simplices goes to $0$, there is a large integer $r\geq 0$ such that realization of each simplex $\sigma$ of $\sd^r(K^n)$ contained one of $U_i$. Let $L_i$ be the subcomplex of $\sd^r(K^n)$ consisting of those simplices whose realization contained in $U_i$. For each $i$, the set is $U_i$ is $\Sigma_n$-invariant  and so for each simplex $\sigma$ of $L_i$, $g|\sigma|$ contained in $U_i$, for any $g\in \Sigma_n$. Therefore $g\sigma$ is a simplex of $L_i$. Thus each subcomplex $L_i$ is symmetric. Also $\{L_i\}_{i=1}^{k}$ covers $\sd^r(K^n)$. Since the maps $p_1, p_2, \cdots, p_n : U_i \hookrightarrow |K|^n \to |K|$ are symmetrically homotopic, their restriction on $|L_i|$, $p_1, p_2, \cdots, p_n : |L_i| \hookrightarrow |K|^n \to |K|$ are also symmetrically homotopic. Now by Proposition \ref{prop homotopy to contiguity} there is positive integer $r_0$ and approximations $\pi_1, \pi_2, \cdots, \pi_n : \sd^{r_0}(L_i)
\hookrightarrow \sd^{r+r_0}(K^n) \rightarrow K$ of $p_1, p_2, \cdots, p_n$ respectively such that they are symmetrically contiguous, for each $i$. Therefore $\scns(K) \leq k$ and hence $\scns(K) = \tcns(|K|)$.
\end{proof}

\end{mysubsection}
\end{section}
	
\begin{section}{Symmetric combinatorial complexity}
		
Tanaka introduced a combinatorial approach of topological complexity (cf. \cite{Tan18}).  The basic idea of  Tanaka's paper is to describe topological complexity by combinatorics of finite posets. He used a combinatorial analogue of the path-space fibration of the Equation \ref{p} to define  \emph{combinatorial complexity} $\CC^0(P)$ of a finite poset $P$, where zero means no barycentric subdivision of $P\times P$  is involved. It is shown that $\CC^0(P) = \TC(P)$ but $\CC^0(P)$ does not capture $\TC(|\KK(P)|)$, where $\KK(P)$ is the \emph{order complex} of $P$. To describe $\TC(|\mathcal{K}(P)|)$ combinatorially, Tanaka used barycentric subdivision of $P\times P$ to define combinatorial complexity $\CC(P)$. Finally it is shown that for a large barycentric subdivision of $P\times P$, $\CC(P) = \TC(|\mathcal{K}(P)|)$. The above idea has been generalized to higher versions and shown that for a finite poset $P$,  $\ccn(P) = \tcn(|\KK(P)|)$ (cf. \cite{Pau19}). Tanaka further defined a combinatorial analog of symmetric topological complexities $\TC^S(X)$ and $\tcs(X)$ for a finite poset $P$ (\cite{Tan19}). We denote this symmetric combinatorial complexities by $\CC^S(P)$ and $\ccs(P)$ respectively. Tanaka there also used barycentric subdivision of $P\times P$ to define these symmetric combinatorial complexities. For a large barycentric subdivision of $P\times P$ the value is stable and he denotes this stable value by $\CC^S(P)$ and $\ccs(P)$ and proved that $\CC^S(P)= \TC^S(|\KK(P)|)$ and $\ccs(P) = \tcs(|\KK(P)|)$. In this section we first recall some basic ideas of finite poset and its connection with finite spaces. Then we recall combinatorial complexity and symmetric combinatorial complexity $\ccs(P)$ of a finite poset $P$. Finally we introduce higher symmetric combinatorial complexity $\ccns(P)$ and prove $\ccns(P) = \tcns(|\KK(P)|)$.
		
\begin{mysubsection}{Finite poset and finite space}\label{finiteposetand finitespace}
We begin by recalling the relation between finite poset and finite space. We refer reader to \cite{Sto66} for this.  Let $P$ be a finite poset. We denote $U_x = \{y: ~~y\leq x\}$. Then $\{U_x; x\in P\}$ generates a $T_0$ topology on the finite set $P$. On the other hand, given a finite $T_0$ topological space $P$, let $U_x$ denotes the intersection of all open sets  containing $x,$ where $x \in P$. Then  we can consider $P$ as a poset, the partial relation on $P$, defined by $x \leq y$ if and only if $U_x \subseteq U_y$. Thus a finite poset is equivalent to a finite $T_0$ space. We will simply write finite space to mean a finite $T_0$ space. From now onwards we assume all our finite spaces are connected. A map between finite spaces is continuous if and only if it preserves the partial order.  Given two finite spaces $P, Q$, we denote by $Q^P$ the space of maps $P\to Q$ with the compact-open topology. This finite $T_0$-space corresponds to the set of order preserving maps $P \to Q$ with the pointwise ordering, i.e. $f \leq g$ if $f (x) \leq g(x)$ for every $x \in P$.

\begin{example}\label{face poset of a simplex is open}
Let $K$ be a finite simplicial complex. Recall that the face poset $\XX(K)$ is the collection of all simplices in $K$ with the partial order of face inclusions. For any simplex $\sigma$ of $K$ we have
$$U_{\sigma} = \{\sigma'; ~ \sigma'\leq \sigma\} = \{\sigma'; ~ \sigma' \text{ is a face of}~ \sigma\} = \XX(\sigma).$$
So $\{\XX(\sigma); ~ \sigma \text{ is a simplex of}~ K\}$ generates a $T_0$ topology on $\XX(K)$.
\end{example}
\begin{remark}\label{face poset of a subcomplex open}
For any subcomplex $L$ of a finite simplicial complex $K$, we have $$\XX(L) = \bigcup_{\sigma\in S(L)}\XX(\sigma).$$ So by Example \ref{face poset of a simplex is open} we can say that $\XX(L)$ is open in $\XX(K)$.
\end{remark}

Let $J_m$ denotes the finite space consisting of $m+1$ points with the zigzag order
\begin{center}
$0 \leq 1 \geq 2 \leq \cdots \geq (\leq)m$.
\end{center}
This finite space is called the \emph{finite fence} with length $m$. It behaves like an interval in the category of finite spaces. An order preserving map $\gamma : J_m \rightarrow P$ is called a combinatorial path or simply a path. Thus a combinatorial path is just a zigzag $\gamma(0) \leq \gamma(1) \geq \gamma(2) \leq \cdots \geq (\leq) \gamma(m)$ of elements of $P$. If $m$ is even, inverse of a path $\gamma : J_m \rightarrow P$ defined by $\gamma^{-1} : J_m \rightarrow P, \gamma^{-1}(i) = \gamma(m-i)$. A connected finite space is always path-connected. Two maps $f, g : P \rightarrow Q$ between two finite spaces are called \emph{combinatorially homotopic} if there exist $m \geq 0$ and a continuous map (or equivalently order preserving map) $H : P \times J_m \rightarrow Q$ such that $H(x, 0) = f(x)$ and $H(x, m) = g(x)$, i.e., there is a fence $f =f_0 \leq f_1 \geq f_2 \leq \cdots f_n =g$.
			
For $n\geq 2$ we denote $J_{n,m}$ be the finite poset of $nm + 1$ points $$\{0, 1_1, 1_2,\cdots ,1_n, 2_1, 2_2,\cdots ,2_n,\cdots ,m_1, m_2,\cdots,m_n\}.$$  The partial ordering on $J_{n,m}$  consists of $n$ finite fences, each of length $m$, as below: $$0 \leq 1_1 \geq 2_1 \leq \cdots \geq (\leq) m_1,$$
$$0 \leq 1_2 \geq 2_2 \leq \cdots \geq (\leq) m_2,$$
$$\cdots $$
$$0 \leq 1_n \geq 2_n \leq \cdots \geq (\leq) m_n.$$ We use the parameter $t_j$ for the $j$-th fence. 
\begin{definition}
Let $f_1, f_2, \cdots, f_n : P \to Q$ be $n$ order preserving maps between two finite spaces. Then we say $f_1, f_2, \cdots, f_n$ are \emph{combinatorially homotopic} if there exists $m \geq 0$ and an order preserving map $H : P \times J_{n, m} \rightarrow Q$ such that $H(x, m_j) = f_j(x)$ for $x\in P$ and $j\in\{1, 2, \cdots, n\}$. Moreover, if $P$ is a $\Sigma_n$-space then we called $f_1, f_2, \cdots, f_n$ are \emph{symmetrically combinatorially homotopic} if the maps satisfies $f_j(gx) = f_{g(j)}(x)$  and the homotopy map satisfy $H(gx, t_j) = H(x, t_{g(j)})$ for any $g\in\Sigma_n, x\in P$ and $j\in\{1, 2, \cdots, n\}$. In this case the homotopy is called \emph{symmetric combinatorial homotopy}.
\end{definition}
The following lemma is a generalization of \cite[Proposition 2.2]{Tan19} .
\begin{lemma}\label{lemma comb homotopy}
Let $P$ be a finite $\Sigma_n$-space and $Q$ be arbitrary finite space. Then any maps $f_1, f_2, \cdots, f_n : P \to Q$ are symmetrically homotopic if and only if they are symmetrically combinatorially homotopic.
\end{lemma}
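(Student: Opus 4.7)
The plan is to prove both implications by adapting the non-symmetric analogue \cite[Proposition 2.2]{Tan19} and carefully tracking the $\Sigma_n$-action in each direction. The essential inputs are (i) the standard realization of a fence in a finite $T_0$-space as a continuous path (which uses that for $f\leq g$ in $Q^P$ the map sending $(x,0)\mapsto g(x)$ and $(x,t)\mapsto f(x)$ for $t>0$ is continuous), and (ii) the exponential adjunction between $P\times I_n \to Q$ and $I_n \to Q^P$ together with the fact that $Q^P$ is a finite $T_0$-space under the pointwise order.

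For the direction $(\Leftarrow)$, given a symmetric combinatorial homotopy $H\colon P\times J_{n,m}\to Q$ with $H(gx,t_j)=H(x,t_{g(j)})$, I would parametrize the $j$-th fence of $J_{n,m}$ by the interval $[0,1]_j$ of $I_n$, dividing it into $m$ subintervals $[\frac{k-1}{m},\frac{k}{m}]_j$. On each such subinterval, the adjacent fence values $H(\cdot,(k-1)_j)$ and $H(\cdot,k_j)$ are comparable in $Q^P$, so the step-function trick above gives a continuous map on $P\times [\frac{k-1}{m},\frac{k}{m}]_j$. These glue over the endpoints to a continuous map $\tilde H\colon P\times I_n\to Q$ because agreement holds at each interior endpoint and at the common origin $0$. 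Since the construction is uniform in $j$ and the $\Sigma_n$-action on $J_{n,m}$ simply permutes the $n$ fences while fixing $0$—matching exactly the action on $I_n$—the identity $\tilde H(gx,t_j)=\tilde H(x,t_{g(j)})$ inherits directly from the combinatorial equivariance of $H$.

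For the direction $(\Rightarrow)$, given a symmetric topological homotopy $\tilde H\colon P\times I_n\to Q$, I would take the adjoint $\widehat H\colon I_n \to Q^P$, a continuous map into the finite $T_0$-space $Q^P$. Its restriction $\alpha_j=\widehat H|_{[0,1]_j}$ is a continuous path in $Q^P$, and by the non-symmetric combinatorial-homotopy lemma (Tanaka's Proposition 2.2) each such path factors through some finite fence. Since there are only $n$ such paths and $Q^P$ is finite, I can choose a single integer $m$ so that every $\alpha_j$ factors through a map $\beta_j\colon J_m\to Q^P$ whose adjoint is a combinatorial homotopy along the $j$-th fence. Taking the common origin $\beta_j(0)=\widehat H(0)$ for all $j$, the $\beta_j$ assemble into $H_{\mathrm{comb}}\colon P\times J_{n,m}\to Q$.

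The main obstacle is making this factorization $\Sigma_n$-equivariant. I would handle it in the spirit of Steps II--III of Proposition \ref{prop homotopy to contiguity}: first choose a factorization $\beta_1\colon J_m\to Q^P$ of $\alpha_1$ through the first fence, and then \emph{define} $\beta_j$ on the $j$-th fence by transport of structure via the transposition $(1\,j)\in\Sigma_n$, setting $\beta_j(k_j)(x):=\beta_1(k_1)((1\,j)x)$. The continuity-side relation $\tilde H(gx,t_j)=\tilde H(x,t_{g(j)})$ guarantees that this $\beta_j$ indeed factors $\alpha_j$ (because the two adjoint paths agree), and a direct check as in Step III shows $\beta_j$ assembled this way satisfy $H_{\mathrm{comb}}(gx,k_j)=H_{\mathrm{comb}}(x,k_{g(j)})$ for all $g\in\Sigma_n$. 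This delivers the required symmetric combinatorial homotopy and completes the proof.
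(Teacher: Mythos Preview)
Your $(\Leftarrow)$ direction is essentially the paper's argument: the paper writes down an explicit $\Sigma_n$-equivariant step map $h'\colon I_n\to J_{n,m}$ (the same formula on every branch) and sets $H=H'\circ(\Id\times h')$; your ``step-function trick'' is exactly this.

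For $(\Rightarrow)$ the paper is terser and slightly cleaner in its packaging. Rather than factoring $\alpha_1$ and then transporting along $(1\,j)$, the paper asserts directly that one can choose a single $\Sigma_n$-equivariant $h\colon J_{n,m}\to I_n$ (same sample points $s_k$ on every branch) for which $H':=H\circ(\Id\times h)$ is order-preserving; the symmetry of $H'$ is then immediate from the symmetry of $H$ and the equivariance of $h$, with no separate verification needed. Unpacking this assertion amounts to the same adjoint/fence-factorization you use, so the two arguments are not genuinely different --- but the paper's framing sidesteps your final ``direct check as in Step III.''

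That final check in your write-up has a small gap worth naming. With $H_{\mathrm{comb}}(x,k_j)=\beta_1(k)((1\,j)x)$ you need
\[
\beta_1(k)\bigl((1\,j)\,g\,x\bigr)=\beta_1(k)\bigl((1\,g(j))\,x\bigr),
\]
i.e.\ each $\beta_1(k)$ must be invariant under the subgroup $G$ generated by the elements $(1\,j)\,g\,(1\,g(j))$ --- this is Step~II material, not Step~III. It does hold, but only because the standard fence factorization realizes $\beta_1(k)=\alpha_1(s_k)$ as a \emph{sample} of $\alpha_1$, and the continuous symmetry forces every $\alpha_1(t)\in Q^P$ to be $G$-invariant. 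You should say this explicitly; ``a direct check as in Step~III'' does not cover it.
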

\begin{proof}
Assume that $f_1, f_2, \cdots, f_n : P \to Q$ are symmetrically homotopic. Then get a symmetric homotopy $H : P \times I_n \to Q$. By homotopy theory of finite spaces, there is a $\Sigma_n$-map $h : J_{n,m} \to I_n$ such that $j$-th fence maps to $j$-th interval and $m_j$ maps to $1_j$, for some $m>0$ and $j\in\{1, 2, \cdots, n\}$. Define $H'$ as the composition map $H' : P \times J_{n, m} \xrightarrow {\Id \times h} P \times I_n \xrightarrow {H} Q$. Then $H'$ is a symmetric combinatorial homotopy between $f_1, f_2, \cdots, f_n$.

Conversely, assume that there is a symmetric combinatorial homotopy  $H' : P \times J_{n, m} \to Q$ between $f_1, f_2, \cdots, f_n$. We define a $\Sigma_n$-map $h' : I_n \to J_{n,m}$ such that each $j$-th interval of $I_n$ maps to $j$-th fence of $J_{n,m}$ given by the equation 
$$
h'(t) =
 \begin{cases}
2k-1 & \mbox{ if } ~~ t = \frac{2k-1}{m},\\
2k &\mbox{if } ~~ \frac{2k-1}{m} < t < \frac{2k+1}{m},
\end{cases}
$$ for $k=0, 1, 2, \cdots$.
Then the composition map $H : P \times I_{n} \xrightarrow {\Id \times h'} P \times J_{n, m} \xrightarrow {H} Q$ gives a symmetric homotopy between $f_1, f_2, \cdots, f_n$.
\end{proof}
Now onwards, we will use symmetrical homotopy to mean any one of the above interpretations. Let us define \emph{barycentric subdivision} of a finite space $P$.
\begin{definition}
The \emph{barycentric subdivision} of a finite space $P$ is defined as the face poset $\mathcal{X}(\mathcal{K}(P))$ of the order complex $\mathcal{K}(P)$ (see  Section \ref{simplicialcomplex}). It is denoted by $\sd(P)$.
\end{definition}

The following Proposition is a symmetric version for $n$ maps of \cite[Lemma 4.10 and Proposition 4.11]{BarM12}.

\begin{prop}\label{homotopy to contiguous in face poset}
Let $P$ be a finite $\Sigma_n$-space and $Q$ be any finite space. Assume that the maps $f_1, f_2, \cdots, f_n : P \to Q$ are symmetrically homotopic. Then the simplicial maps $\KK(f_1), \KK(f_2), \cdots, \KK(f_n) : \KK(P) \to \KK(Q)$ are symmetrically contiguous.
\end{prop}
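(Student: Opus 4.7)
The plan is to reduce the topological assertion to a combinatorial one via Lemma \ref{lemma comb homotopy}, invoke the non-symmetric version of the statement \cite[Lemma~4.10 and Proposition~4.11]{BarM12} step by step through a symmetric combinatorial homotopy, and then adapt the transposition trick from the proof of Proposition \ref{prop homotopy to contiguity} to make the resulting contiguity chain symmetric.

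First, I would apply Lemma \ref{lemma comb homotopy} to produce a symmetric combinatorial homotopy $H\colon P\times J_{n,m}\to Q$ with $H(x,m_j)=f_j(x)$ and $H(gx,t_j)=H(x,t_{g(j)})$. Define poset maps $h_j^l\colon P\to Q$ by $h_j^l(x):=H(x,l_j)$ for $l=1,\dots,m$, and set $h_*(x):=H(x,0)$; the fact that $0$ is the common minimum of the $n$ fences of $J_{n,m}$ forces $h_*$ to be $\Sigma_n$-invariant, while $h_j^l(gx)=h_{g(j)}^l(x)$ for $l\geq 1$. Since $l_j$ and $(l{+}1)_j$ are comparable in $J_{n,m}$ (and $0\leq 1_j$), each consecutive pair $h_j^l,h_j^{l+1}$ and the pair $h_*,h_j^1$ are pointwise comparable as poset maps $P\to Q$. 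Applying $\KK$ and invoking \cite[Lemma~4.10 and Proposition~4.11]{BarM12} on each comparable pair supplies, for every $j$, a chain of simplicial maps
\[
\KK(h_*)=\psi_j^0,\ \psi_j^1,\ \dots,\ \psi_j^{C}=\KK(f_j)
\]
with consecutive terms $1$-contiguous (after padding, the common length $C$ can be taken independent of $j$).

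The remaining task is to arrange the intermediate maps symmetrically, following Steps~II--III in the proof of Proposition \ref{prop homotopy to contiguity}. I would: (i) construct the $j=1$ chain $\{\psi_1^s\}_{s}$ so that each $\psi_1^s$ is invariant on vertices under the subgroup $G\leq\Sigma_n$ generated by $\{(1,j)\,g\,(1,g(j)):1\leq j\leq n,\ g\in\Sigma_n\}$, by picking a representative $v_0$ on each $G$-orbit of $V(\KK(P))$ and declaring $\psi_1^s(v):=\psi_1^s(v_0)$; and (ii) for $j\geq 2$, define $\psi_j^s(v):=\psi_1^s\bigl((1,j)\,v\bigr)$. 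The equivariance $\psi_j^s(gv)=\psi_{g(j)}^s(v)$ is then the verbatim computation from Proposition \ref{prop homotopy to contiguity}, and $1$-contiguity of $\psi_j^s$ with $\psi_j^{s+1}$ is inherited from that of $\psi_1^s,\psi_1^{s+1}$ by transporting along the simplicial automorphism $(1,j)$ of $\KK(P)$. The common central map $\phi_*:=\KK(h_*)=\psi_j^0$ is $\Sigma_n$-invariant on vertices since $h_*$ is, matching the requirement on $\phi_*$ in the definition of symmetrically contiguous.

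The hard part will be step (i): in Proposition \ref{prop homotopy to contiguity} the analogous orbit-constant modification is justified because a map satisfying the star condition is automatically a simplicial approximation, whereas here each $\psi_1^s$ is an honest simplicial map $\KK(P)\to\KK(Q)$, and redefining it to be constant on $G$-orbits could in principle destroy either chain preservation (images of chains in $\KK(P)$ must remain chains in $\KK(Q)$) or the $1$-contiguity with its neighbours in the chain. I expect this obstacle to be overcome by choosing the contiguity chain from \cite{BarM12} cooperatively with the $G$-orbit structure---for instance by selecting, at each representative $v_0$, an intermediate value $\psi_1^s(v_0)$ lying inside a single chain of $Q$ that simultaneously contains the images of $v_0$ under $\KK(h_1^l)$ and $\KK(h_1^{l+1})$ and is also stable under the $\Sigma_n$-transported chains at the other sites of the orbit---so that the simplicial-map and $1$-contiguity conditions on the modified $\psi_1^s$ reduce to the corresponding conditions on the smaller set of $G$-orbit representatives.
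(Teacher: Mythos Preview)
Your proposal has a genuine gap precisely where you flag it. In Proposition~\ref{prop homotopy to contiguity} the orbit-constant modification of $\psi_1^l$ works because ``being a simplicial approximation'' is a \emph{local} open-star condition: once you check $h_1^l(\st(v))\subset\st(\phi_1^l(v))$ vertex by vertex, you are done. Here, by contrast, each $\psi_1^s:\KK(P)\to\KK(Q)$ is an honest simplicial map, and that is a \emph{global} condition: for every chain $v_0<\cdots<v_k$ in $P$ the images must again form a chain in $Q$. If you replace $\psi_1^s(v_i)$ by $\psi_1^s(v_i')$ with $v_i'$ the $G$-orbit representative of $v_i$, the tuple $(v_0',\dots,v_k')$ is in general \emph{not} a chain in $P$, so there is no reason $\psi_1^s(v_0'),\dots,\psi_1^s(v_k')$ should be a chain in $Q$. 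The same objection applies to the $1$-contiguity of consecutive modified maps. Your closing paragraph proposes to pick the intermediate values ``cooperatively with the $G$-orbit structure'', but gives no mechanism for doing so; this is exactly the missing idea.

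The paper avoids the problem by never leaving the poset level. Starting from a one-step symmetric combinatorial homotopy $f_j^0\leq f_j$, it runs the Barmak--Minian interpolation \emph{simultaneously for all $j$}: set $A_j=\{x\in P:f_j^0(x)\neq f_j(x)\}$ and define $f_j^1$ by switching $f_j^0$ to $f_j$ at the maximal elements of $A_j$, then iterate. The point is that this construction is canonical enough to be equivariant automatically: from $f_j^0(gx)=f_{g(j)}^0(x)$ and $f_j(gx)=f_{g(j)}(x)$ one gets $A_{g(j)}=g^{-1}A_j$, so $gx$ is maximal in $A_j$ iff $x$ is maximal in $A_{g(j)}$, and hence $f_j^l(gx)=f_{g(j)}^l(x)$ for every intermediate $l$. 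Only after this symmetric chain of \emph{poset} maps is in hand does one apply $\KK$; the $1$-contiguity of $\KK(f_j^{l-1})$ and $\KK(f_j^l)$ then follows because on any chain $C\subset P$ the two maps differ at at most one point. In short, the transposition trick of Proposition~\ref{prop homotopy to contiguity} is unnecessary here: the Barmak--Minian construction already respects the symmetry, and recognising this is the key step you are missing.
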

\begin{proof}
 Without loss of generality we may assume that there is symmetric homotopy $F : J_{n, 1} \times P \to Q$. We write $f_j^0(x)=F(x, 0_j)\leq F(x, 1_j)= f_j(x)$ for all $x\in P$. Then we have $f_j^0(gx)= f_{g(j)}^0(x)$ and  $f_j(gx)= f_{g(j)}(x)$ for any $j$ and any $x\in P, g\in \Sigma_n$. We prove our result in four steps. First we construct sequences of order preserving maps $f_j^0, f_j^1, f_j^2, \cdots, f_j^m=f_j:P \to Q$.
	
\textbf{Step I:} Here we define the maps $f_j^1: P\to Q$. Define $A_j=\{x\in P; f_j^0(x)\neq f_j(x)\}$. Note that $A_{g(j)} = g^{-1}A_j$. So if one of $A_j$'s is empty the all $A_j$ are empty and in this case $f_j^1=f_j$. Assume that $A_1$ is non empty. Then we define $f_j^1: P \to Q$ by,
$$
f_j^1(x) = 
\begin{cases}
f_j(x) & \mbox{ if} ~~ x \in A_j \mbox{ and it is a maximal element of } A_j \\
f_j^0(x) & \mbox{ otherwise.} 
\end{cases}
$$
\noindent We claim that the maps $f_j^1$ are order preserving. Let $x_1, x_2 \in P$ with $x_2\geq x_1$. If none of them is maximal element of $A_j$ then $f_j^1(x_2)\geq f_j^1(x_1)$, since $f_j^0$ is order preserving. If $x_2$ is maximal element of $A_j$ and $x_1$ is any other element of $P$ with $x_2\geq x_1$ (so $x_1$ can not be maximal of $A_j$) then $f_j^1(x_2)=f_j(x_2)\geq f_j(x_1)\geq f_j^0(x_1)= f_j^1(x_1)$. Lastly if $x_1$ is maximal element of $A_j$ and $x_2(\notin A_j) \ x_1$ then $f_j^1(x_2)=f_j^0(x_2)= f_j^1(x_2)\geq f_j^1(x_1)= f_j^1(x_1)$. So the maps $f_j^1$ are order preserving.

\textbf{Step II:} We now show that $f_j^1(gx) = f_{g(j)}^1(x)$ for any $j$ and any $x\in P, g\in \Sigma_n$. It is clear that $gx$ is a maximal element of $A_j$ if and only if $x$ is a maximal element of $g^{-1}A_j=A_{g(j)}$. If $gx$ is maximal element of $A_j$ then $f_j^1(gx) = f_j(gx) = f_{g(j)}(x)= f_{g(j)}^1(x)$. Also if $gx$ is not a maximal element of $A_j$ then $f_j^1(gx) = f_j^0(gx) = f_{g(j)}^0(x)= f_{g(j)}^1(x)$. So we have $f_j^1(gx) = f_{g(j)}^1(x)$ holds for any $j$ and  any $x\in P, g\in \Sigma_n$.

\textbf{Step III:}  Now we repeat this construction. We use $f_j^1$ and $f_j$ to define $f_j^2$, and use $f_j^2$ and $f_j$ to define $f_j^3$ and so on. By finiteness of $P$ and $Q$ this process will end and we get the order preserving maps  $f_j^0, f_j^1, f_j^2, \cdots, f_j^m=f_j:P \to Q$ such that $f_j^l(gx)=f_{g(j)}^l(x)$ for any $1\leq j \leq n, 0\leq l\leq m, g\in \Sigma_n$ and $x\in P$. 

\textbf{Step IV:} Now we show that the simplicial maps $\KK(f_1), \KK(f_2), \cdots, \KK(f_n) : \KK(P) \to \KK(Q)$ are symmetrically contiguous. Let $C= \{x_0\leq x_1\leq, \cdots,\leq x_c\}$ be a chain in $P$. Since the set $\{x\in P; f_j^{l-1}(x)\neq f_j(x)\}$ has at most one maximal element in $C$, so by definition of $f_j^l$ we have, $f_j^{l-1}$ and $f_j^l$ differ by at most one element on $C$ (say $x_p$). Thus we have $$f_j^{l-1}(x_0)=f_j^{l}(x_0)\leq  \cdots \leq f_j^{l-1}(x_p)\leq f_j^{l}(x_p) \leq \cdots \leq f_j^{l-1}(x_c)=f_j^{l}(x_c).$$ Therefore for any chain $C$ in P, $f_j^{l-1}(C) \cup f_j^l(C)$ is also a chain in $Q$. In other words for any simplex $\sigma$ of $\KK(P)$, $\KK(f_j^{l-1})(\sigma) \cup \KK(f_j^l)(\sigma)$ is a simplex in $\KK(Q)$. So $\KK(f_j^{l-1})(\sigma)$ and $\KK(f_j^l)(\sigma)$ are one contiguous. Also $f_j^l(gx) = f_{g(j)}^l(x)$ implies $\KK(f_j^l)(gx) = \KK(f_{g(j)}^l)(x)$ holds for any $x\in V(\KK(P))\simeq P$. So we can say that the simplicial maps $\KK(f_1), \KK(f_2), \cdots, \KK(f_n) : \KK(P) \to \KK(Q)$ are symmetrically contiguous.
\end{proof}

 To prove our main theorem of this section, we need the following lemma. It is a generalised version of \cite[Lemma 2.4]{Tan19}.
\begin{lemma}\label{contiguous to homotopy in face poset}
Let $K$ be a finite simplicial $\Sigma_n$-complex and the maps $\phi_1, \phi_2, \cdots, \phi_n : K \to L$ are symmetrically contiguous. Then the induced maps $\XX(\phi_j) : \XX(K) \to \XX(L)$ are symmetrically homotopic.
\end{lemma}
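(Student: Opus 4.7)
The plan is to invoke Lemma \ref{lemma comb homotopy} and exhibit an explicit symmetric combinatorial homotopy $H:\XX(K)\times J_{n,2c}\to\XX(L)$, where $c$ is the length coming from the given symmetric contiguity. By hypothesis there is a simplicial map $\phi_*:K\to L$ with $\phi_*(gv)=\phi_*(v)$, together with intermediate simplicial maps $\phi_j^l:K\to L$ for $0\le l\le c$, $1\le j\le n$, satisfying $\phi_j^0=\phi_*$, $\phi_j^c=\phi_j$, the $1$-contiguity condition $\phi_j^{l-1}(\sigma)\cup\phi_j^l(\sigma)\in S(L)$ for every simplex $\sigma$ of $K$, and the equivariance $\phi_j^l(gv)=\phi_{g(j)}^l(v)$.

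First I would define $H$ on the $j$-th fence of $J_{n,2c}$ by
\begin{equation*}
H(\sigma,(2l)_j)=\phi_j^l(\sigma)\ (0\le l\le c),\qquad H(\sigma,(2l-1)_j)=\phi_j^{l-1}(\sigma)\cup\phi_j^l(\sigma)\ (1\le l\le c),
\end{equation*}
where the common basepoint of $J_{n,2c}$ is covered by $l=0$, giving $H(\sigma,0)=\phi_*(\sigma)$. The $1$-contiguity of consecutive $\phi_j^{l-1}$ and $\phi_j^l$ guarantees that each union $\phi_j^{l-1}(\sigma)\cup\phi_j^l(\sigma)$ is indeed a simplex of $L$, so $H$ takes values in $\XX(L)$. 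The zigzag order on $J_{n,2c}$ places even coordinates as valleys and odd coordinates as peaks; since $\phi_j^{l-1}(\sigma)$ and $\phi_j^l(\sigma)$ are both faces of their union, the prescribed values respect the face-inclusion order of $\XX(L)$ and $H$ is order preserving in the second variable. It is manifestly order preserving in the first variable because each $\XX(\phi_j^l)$ is. Finally $H(\sigma,(2c)_j)=\phi_j^c(\sigma)=\XX(\phi_j)(\sigma)$, so $H$ is a combinatorial homotopy between $\XX(\phi_1),\ldots,\XX(\phi_n)$.

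Next I would verify the symmetric identity $H(g\sigma,t_j)=H(\sigma,t_{g(j)})$. At the basepoint, using $\phi_*(gv)=\phi_*(v)$, one has $H(g\sigma,0)=\phi_*(g\sigma)=\phi_*(\sigma)=H(\sigma,0)$. At an even interior coordinate,
\begin{equation*}
H(g\sigma,(2l)_j)=\phi_j^l(g\sigma)=\phi_{g(j)}^l(\sigma)=H(\sigma,(2l)_{g(j)}),
\end{equation*}
and at an odd coordinate,
\begin{equation*}
H(g\sigma,(2l-1)_j)=\phi_j^{l-1}(g\sigma)\cup\phi_j^l(g\sigma)=\phi_{g(j)}^{l-1}(\sigma)\cup\phi_{g(j)}^l(\sigma)=H(\sigma,(2l-1)_{g(j)}).
\end{equation*}
Thus $H$ is a symmetric combinatorial homotopy, and Lemma \ref{lemma comb homotopy} upgrades it to a symmetric (topological) homotopy between $\XX(\phi_1),\ldots,\XX(\phi_n)$.

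The only genuine bookkeeping obstacle is to place the data correctly along the fence: putting the image simplices at even coordinates and the unions at odd coordinates exactly matches the valley/peak pattern of $J_{2c}$, and any other placement breaks order-preservation in the fence direction. Once this placement is fixed, continuity of $H$ and the symmetry identity follow term-by-term from the three properties carried by symmetric contiguity ($1$-contiguity of consecutive maps, $\phi_j^l(gv)=\phi_{g(j)}^l(v)$, and $\phi_*(gv)=\phi_*(v)$).
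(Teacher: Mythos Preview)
Your proposal is correct and follows essentially the same approach as the paper: both construct the combinatorial homotopy $H:\XX(K)\times J_{n,2c}\to\XX(L)$ by placing $\XX(\phi_j^l)(\sigma)=\phi_j^l(\sigma)$ at the even (valley) coordinates and the unions $\phi_j^{l-1}(\sigma)\cup\phi_j^l(\sigma)$ at the odd (peak) coordinates, then verify order preservation and the equivariance $H(g\sigma,t_j)=H(\sigma,t_{g(j)})$ directly from the symmetric contiguity data. The paper names the union map $h_j^l$ and omits your final appeal to Lemma~\ref{lemma comb homotopy} (since after that lemma the two notions of symmetric homotopy are declared interchangeable), but otherwise the arguments are the same.
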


\begin{proof}
Assume that there is a chain of contiguous maps $\phi_*=\phi_j^0, \phi_j^1, \cdots, \phi_j^c=\phi_j : K \to L$ such that $\phi_*(gv)=\phi_*(v)$ and $\phi_j^l(gv) = \phi_{g(j)}(v)$ for $1\leq l \leq c, 1\leq j\leq n, v\in V(K)$ and $g\in \Sigma_n$. For each $l$ and $j$ we define 
$$h_j^l:\XX(K) \to \XX(L), ~~h_j^l(\sigma) = \phi_j^{l-1}(\sigma)\cup\phi_j^l(\sigma),$$
 for any $\sigma\in\XX(K)$. Since $\XX(\phi_j^{l-1})(\sigma) = \phi_j^{l-1}(\sigma)$ and $\XX(\phi_j^{l})(\sigma) = \phi_j^{l}(\sigma)$, so $\XX(\phi_j^{l-1}) \leq h_j^l \geq \XX(\phi_j^{l})$. We define a combinatorial homotopy $H : \XX(K) \times J_{n, 2c} \to \XX(L)$ by, 
$$
H(\sigma, m_j) =
 \begin{cases}
h_j^{\frac{m+1}{2}}(\sigma) & \mbox{ if } ~~ m  \mbox{ is odd,}\\
\XX\big(\phi_j^{\frac{m}{2}}\big)(\sigma) &\mbox{  if } ~~~~ m  \mbox{ is even}.
\end{cases}
$$ 
Now  $\XX(\phi_j^{l-1}) \leq h_j^l \geq \XX(\phi_j^{l})$ implies $H$ is order preserving and $H(\sigma, 2c_j) = \XX(\phi_j^{c})(\sigma)=\phi_j(\sigma)$. Also $$\XX(\phi_j^{l})(g\sigma) = \phi_j^{l}(g\sigma) = \phi_{g(j)}^{l}(\sigma) = \XX(\phi_{g(j)}^{l})(\sigma),$$ and 
$$h_j^l(g\sigma) = \phi_j^{l-1}(g\sigma)\cup\phi_j^l(g\sigma) = \phi_{g(j)}^{l-1}(\sigma)\cup\phi_{g(j)}^l(\sigma) = h_{g(j)}^l(\sigma).$$ Hence $H$ is a symmetric homotopy between $\XX(\phi_j), ~ 1 \leq j\leq n$.
\end{proof}
	
\end{mysubsection}

\begin{mysubsection}{Combinatorial complexity}
Here we recall the basics of higher combinatorial complexity $\ccn(P)$ as introduced in (\cite{Pau19}). Consider the mapping space $P^{J{n,m}}$, and the canonical order preserving map 
\begin{equation}\label{qnm}
 q_{n, m} : P^{J_{n, m}}
\rightarrow P^n, ~~q_{n,m}(\gamma) = (\gamma (m_1), \gamma(m_2),\cdots ,
\gamma (m_n)).
\end{equation}
 Note that Tanaka considered the following map to define $\CC_2(P)$,
$$q_m: P^{J_m} \rightarrow P \times P \text{ defined by } q_m(\gamma) = (\gamma (0), \gamma (m)), ~~m \geq 0.$$

\noindent Let $\tau_{P^n} : \sd(P^n) \rightarrow P^n$ be the map sending $p_0 \leq p_1 \leq \cdots \leq p_n$ to the last element $p_n$. This is a weak homotopy equivalence, and the induced simplicial map $\mathcal{K}(\tau_{P^n}) : \mathcal{K}(\sd(P^n)) = \sd(\mathcal{K}(P^n)) \rightarrow \mathcal{K}(P^n)$ is a simplicial approximation of the identity on $|\mathcal{K}(P^n)|$ (see \cite{HarV93}). For $r \geq 0$, we define
 \begin{equation}\label{tau}
 \tau_{P^n}^r : \sd^r(P^n) \rightarrow P^n
 \end{equation} 
as the composition
\begin{center}
$ \sd^r(P^n) \xrightarrow {\tau_{\sd^{r-1}(P^n)}} \sd^{r-1}(P^n) \xrightarrow {\tau_{\sd^{r-2}(P^n)}} \cdots   \xrightarrow {\tau_{\sd(P^n)}} \sd(P^n) \xrightarrow {\tau_{P^n}} P^n$.
\end{center}

\noindent Note that $\tau_{P^n}^0$ is identity map on $P^n$. For $r \geq 0$, $\CC_n^r(P)$ is defined to be the smallest non-negative integer $k$ such that there exist an open cover $\{Q_i\}_{i=1}^k$ of $\sd^r (P^n)$ and an positive integer $m$, with a map $s_i : Q_i \rightarrow P^{J_{n, m}}$ such that $q_{n, m} \circ s_{i} =  \tau_{P^n}^r$ on $Q_i$ for each $i$. If no such $k$ exists, then $\CC_n^r(P) = \infty$. The sequence $\{\CC_n^r(P)\}$ is a decreasing sequence on $r$. So we have the following definition for  \emph{$n$-th combinatorial complexity}.
	
 \begin{definition} 
 The \emph{$n$-th combinatorial complexity} $\ccn(P)$ of $P$ is defined as: 
 $$\ccn(P) = \min_{r \geq 0} \{\CC_n^r(P)\}. $$
\end{definition}
	
\noindent We have the following theorem.

\begin{theorem}[\cite{Pau19}]
For  any finite space $P$, we have $\ccn(P) = \tcn(|\mathcal{K}(P)|)$, $n \geq 2$. 
\end{theorem}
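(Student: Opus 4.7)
The plan is to prove both inequalities $\ccn(P) \leq \tcn(|\KK(P)|)$ and $\ccn(P) \geq \tcn(|\KK(P)|)$, mirroring the strategy of Theorem \ref{thm sym sc sym tc equal} but without the equivariance constraint. The main ingredients are the weak equivalence $|\KK(P)| \to P$ (which extends to products so that $|\KK(P^n)|$ is weakly equivalent to $|\KK(P)|^n$), the equivalence of topological and combinatorial homotopy provided by Lemma \ref{lemma comb homotopy} (used with trivial $\Sigma_n$-action), the weak equivalence $\tau_{P^n}^r : \sd^r(P^n) \to P^n$ of \eqref{tau}, and the fact that open subposets of $\sd^r(P^n)$ correspond under the order-complex/realization functors to open subsets of $|\KK(P)|^n$ (cf.\ Remark \ref{face poset of a subcomplex open}).

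For $\ccn(P) \leq \tcn(|\KK(P)|)$, I would start with an open cover $\{U_i\}_{i=1}^{k}$ of $|\KK(P)|^n$ realizing $k = \tcn(|\KK(P)|)$, chosen so that on each $U_i$ the projection maps $p_1, \ldots, p_n : U_i \to |\KK(P)|$ are homotopic (the non-symmetric analogue of Remark \ref{remark sym tc definition}). By compactness and Lebesgue's lemma applied after sufficiently many barycentric subdivisions, one picks $r$ large enough that the realization of every simplex of $\sd^r(\KK(P^n))$ lies inside some $U_i$; this extracts open subposets $Q_i \subseteq \sd^r(P^n)$ whose realizations sit inside $U_i$. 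On each $Q_i$ the compositions $p_j \circ \tau_{P^n}^r : Q_i \to P$ are topologically homotopic, and Lemma \ref{lemma comb homotopy} converts these into a combinatorial homotopy $H : Q_i \times J_{n,m} \to P$, whose adjoint $s_i : Q_i \to P^{J_{n,m}}$ is the desired combinatorial section of $q_{n,m}$ from \eqref{qnm}.

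For the reverse inequality $\ccn(P) \geq \tcn(|\KK(P)|)$, begin with an open cover $\{Q_i\}_{i=1}^{k}$ of $\sd^r(P^n)$ together with combinatorial sections $s_i : Q_i \to P^{J_{n,m}}$ witnessing $\ccn(P) = k$. Each section is adjoint to a combinatorial homotopy between the $n$ projections $p_j \circ \tau_{P^n}^r$ restricted to $Q_i$, and Lemma \ref{lemma comb homotopy} turns this into a topological homotopy after realizing the order complex. The open sets $|\KK(Q_i)|$ then form an open cover of $|\KK(\sd^r(P^n))| \simeq |\KK(P)|^n$ on which the $n$ projection maps are pairwise homotopic, producing a motion planner that shows $\tcn(|\KK(P)|) \leq k$.

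The main obstacle is the careful bookkeeping around the barycentric subdivision and the map $\tau_{P^n}^r$: one must verify that an open subposet $Q_i \subseteq \sd^r(P^n)$ really does correspond under geometric realization to an open subset of $|\KK(P)|^n$ matching the given cover, and that the compositions $p_j \circ \tau_{P^n}^r$ correctly recover the topological projections up to the weak equivalence. A secondary but easier issue is uniformizing the fence-length parameter $m$ across the finite collection $\{Q_i\}$ by replacing it with the maximum value, since composing a combinatorial path with an order-preserving reparametrization $J_{n,m'} \to J_{n,m}$ preserves endpoints.
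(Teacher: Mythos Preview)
The paper does not actually prove this theorem; it is quoted from \cite{Pau19}. The closest proof in the present paper is that of the symmetric analogue, Theorem~\ref{thm sym cc sym tc equal}, and that proof does \emph{not} go directly between $\ccns$ and $\tcns$ as you attempt. Instead it factors through simplicial complexity: first $\ccns(P)=\scns(\KK(P))$ via Proposition~\ref{homotopy to contiguous in face poset} and Lemma~\ref{contiguous to homotopy in face poset}, and then $\scns(\KK(P))=\tcns(|\KK(P)|)$ by Theorem~\ref{thm sym sc sym tc equal}. The non-symmetric version in \cite{Pau19} follows the same two-step route.

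Your direct approach has a genuine gap in the direction $\ccn(P)\leq\tcn(|\KK(P)|)$. You start with a homotopy between continuous maps $p_j:U_i\to|\KK(P)|$ and then assert that ``the compositions $p_j\circ\tau_{P^n}^r:Q_i\to P$ are topologically homotopic, and Lemma~\ref{lemma comb homotopy} converts these into a combinatorial homotopy.'' But Lemma~\ref{lemma comb homotopy} concerns maps \emph{between finite spaces}; it says nothing about how a homotopy of maps into $|\KK(P)|$ induces one of maps into the finite space $P$. The passage from a homotopy on realizations to a homotopy of order-preserving maps is exactly the content that the paper supplies by the detour through contiguity: one first uses simplicial approximation (Proposition~\ref{prop homotopy to contiguity} in the symmetric case) to replace the topological homotopy by a contiguity chain of simplicial maps $\sd^{r_0}(L_i)\to\KK(P)$, and only then applies $\XX(-)$ and Lemma~\ref{contiguous to homotopy in face poset} to obtain a combinatorial homotopy on the face poset. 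Without that step you have no mechanism to produce the section $s_i:Q_i\to P^{J_{n,m}}$.

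There is a smaller issue in the other direction: the sets $|\KK(Q_i)|$ are realizations of subcomplexes and hence closed, not open, in $|\KK(\sd^r(P^n))|$, so they do not directly give an open cover witnessing $\tcn$. The paper's route avoids this by first landing in simplicial complexity (where subcomplex covers are the correct data) and then invoking Corollary~\ref{prop homotopy} to pass from a subcomplex cover to a locally compact splitting of $|K|^n$; your sketch would need the same device.
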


\end{mysubsection}
		
\begin{mysubsection}{Symmetric combinatorial complexity}
In (\cite{Tan19}), Tanaka defined symmetric combinatorial complexity $\ccs(P)$ for a finite space $P$. He considered the mapping space $P^{J_{2m}}$ and the canonical map $$q_{2m}: P^{J_{2m}} \rightarrow P \times P, ~~~ q_{2m}(\gamma) = (\gamma (0), \gamma (2m)), ~~m \geq 0.$$ 
 
\noindent Note that $P^{J_{2m}}$ and $P \times P$ both are $\zz$-spaces where the non trivial action is given by $\gamma \to \gamma^{-1}$ and $(x, y) \to (y, x)$. Also $q_{2m}$ is a $\zz$-map. Tanaka defined $\CC_{2,2m}^{\Sigma, r}(P)$ as the smallest non-negative integer $k$ such that there exist $\zz$-invariant open cover $\{Q_i\}_{i=1}^k$ of $\sd^r (P\times P)$ and on each $Q_i$ there is a $\zz$-map $s_i : Q_i \rightarrow P^{J_{2m}}$ with $q_{2m} \circ s_{i} =  \tau_{P\times P}^r$. If no such $k$ exists, then $\CC_{2,2m}^{\Sigma, r}(P) = \infty$. He proved that $\{\CC_{2,2m}^{\Sigma, r}(P)\}$ is a decreasing sequence on $m$ and $r$ both. He defined the \emph{symmetric combinatorial complexity} $\ccs(P)$ as the limit of $\CC_{2,2m}^{\Sigma, r}(P)$ as $m \to \infty, r\to \infty$.

Let us define $n$-th symmetric combinatorial complexity. Here we consider the space $P^{J{n,m}}$ and the map $q_{n, m} : P^{J_{n, m}} \rightarrow P^n$ as in Equation \ref{qnm}. Note that  ${J_{n, m}}$ is a $\Sigma_n$-space, the action given by $g(t_j)= t_{g(j)}, t\in\{1, 2, \cdots, m\},  j\in\{1, 2, \cdots, n\}$. This induces an action of $\Sigma_n$ on $P^{J{n,m}}$ given by $g\gamma(t_j)=\gamma(t_{g(j)})$. Also $P^n$ is a $\Sigma_n$-space, the action is given by $g(x_1, x_2, \cdots x_n) = (x_{g(1)}, x_{g(2)}, \cdots, x_{g(n)})$ and the maps $q_{n,m}$ in Equation \ref{qnm} and $\tau_{P^n}^r$ in Equation \ref{tau} are $\Sigma_n$-maps.
			
\begin{definition}
Let $P$ be a finite space. We define $\CC_{n,m}^{\Sigma, r}(P)$ as the minimum number $k$ such that there is a $\Sigma_n$-invariant open cover $\{Q_i\}_{i=1}^k$ of $\sd^r(P^n)$ and on each open set there is $\Sigma_n$-map $s_i : Q_i \to P^{J_{n,m}}$ satisfying $q_{n, m} \circ s_{i} =  \tau_{P^n}^r$. If no such $k$ exists, then we define $\CC_{n,m}^{\Sigma, r}(P)$ to be $\infty$.
\end{definition}

\begin{lemma}\label{lemma ccm}
Let $P$ be a finite space. Then $\CC_{n,m}^{\Sigma, r}(P) \geq \CC_{n,m+1}^{\Sigma, r}(P)$, for any $m,r\geq0$ and $n\geq 2$.
\end{lemma}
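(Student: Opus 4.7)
The plan is to construct a single $\Sigma_n$-equivariant ``extension'' map $\rho^* : P^{J_{n, m}} \to P^{J_{n, m+1}}$ that commutes with the canonical projections $q_{n, m}$ and $q_{n, m+1}$. Once this is in hand, any $\Sigma_n$-invariant open cover of $\sd^r(P^n)$ with local $\Sigma_n$-equivariant sections into $P^{J_{n, m}}$ transports directly (via post-composition with $\rho^*$) to a $\Sigma_n$-invariant cover of the same cardinality with local $\Sigma_n$-equivariant sections into $P^{J_{n, m+1}}$.

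First, I would define an order-preserving, $\Sigma_n$-equivariant map $\rho : J_{n, m+1} \to J_{n, m}$ by $\rho(0) = 0$, $\rho(i_j) = i_j$ for $1 \leq i \leq m$ and $1 \leq j \leq n$, and $\rho((m+1)_j) = m_j$. All order relations in $J_{n, m+1}$ lie inside a single fence, and for $1 \leq i \leq m - 1$ the relations are preserved tautologically; the only novel comparison is $m_j$ versus $(m+1)_j$, whose direction depends on the parity of $m$, but since both of these points collapse to $m_j$ the induced relation is the trivial $m_j = m_j$. Equivariance of $\rho$ is immediate because $\rho$ preserves the fence index $j$.

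Next, I would take $\rho^*(\gamma) := \gamma \circ \rho$, giving a map $\rho^* : P^{J_{n, m}} \to P^{J_{n, m+1}}$. Since $\rho$ is order-preserving, $\rho^*$ is order-preserving and hence continuous between the finite function spaces. Equivariance of $\rho^*$ under the action $(g \gamma)(t_j) = \gamma(t_{g(j)})$ follows from equivariance of $\rho$ by a one-line chase:
\[
\rho^*(g \gamma)(t_j) = (g \gamma)(\rho(t_j)) = \gamma(g \cdot \rho(t_j)) = \gamma(\rho(g \cdot t_j)) = (g \rho^*(\gamma))(t_j).
\]
The identity $\rho((m+1)_j) = m_j$ then yields $q_{n, m+1} \circ \rho^* = q_{n, m}$ by direct evaluation.

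Finally, starting from $k = \CC_{n, m}^{\Sigma, r}(P)$ realized by a $\Sigma_n$-invariant open cover $\{Q_i\}_{i=1}^k$ of $\sd^r(P^n)$ together with $\Sigma_n$-equivariant sections $s_i : Q_i \to P^{J_{n, m}}$ satisfying $q_{n, m} \circ s_i = \tau_{P^n}^r$, the maps $s_i' := \rho^* \circ s_i$ are $\Sigma_n$-equivariant and satisfy $q_{n, m+1} \circ s_i' = q_{n, m} \circ s_i = \tau_{P^n}^r$ on the same cover, producing $\CC_{n, m+1}^{\Sigma, r}(P) \leq k$. There is no serious obstacle in this argument; it is entirely formal once one sees that the only freedom needed is in the choice of $\rho$, and the mildly delicate point is to ensure $\rho$ collapses only the new top point of each fence onto its immediate predecessor so that the alternating order of the zigzag is not disturbed.
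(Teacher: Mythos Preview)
Your proof is correct and essentially identical to the paper's: the paper defines the same retraction $R:J_{n,m+1}\to J_{n,m}$ sending $(m+1)_j\mapsto m_j$, takes the induced map $R^*:P^{J_{n,m}}\to P^{J_{n,m+1}}$, and post-composes the sections $s_i$ with $R^*$. The only difference is that you spell out the order-preservation and equivariance checks more carefully, which the paper leaves implicit.
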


\begin{proof}

 We assume that $\CC_{n,m}^{\Sigma, r}(P) = k$ and  $\{Q_i\}_{i=1}^k$ is a $\Sigma_n$-invariant open cover of $\sd^r(P^n)$ such that  on each open set there is $\Sigma_n$-map $s_i : Q_i \to P^{J_{n,m}}$ satisfying $q_{n, m} \circ s_{i} =  \tau_{P^n}^r$. Consider the retraction map $R : J_{n, m+1} \rightarrow J_{n, m}$ sending each $(m+1)_j$ to $m_j$ for $1\leq j \leq n$. Clearly this is an order preserving map and it induces a $\Sigma_n$-map $R^* : P^{J_{n, m}} \rightarrow P^{J_{n, m+1}}, \gamma \rightarrow \gamma \circ R$. Then we have the following commutative diagram.
  $$
  \xymatrix{
 P^{J_{n, m}} \ar[rrrr]^{R^{*}}  \ar[ddrr]_{q_{n, m}}& &&&P^{J_{n, m+1}} \ar[ddll]^{q_{n, m+1}}\\
 && Q_i\subset\sd^r(P^n) \ar[d]^{\tau_{P^n}^r} \ar[llu]_{s_i} \ar@{.>}[rru]^{s'_{i}} \\ && P^n 
 }
 $$

\noindent Note that the composition map $R^* \circ s_i=s'_i : Q_i \rightarrow P^{{n, m+1}}$ is a $\Sigma_n$-map and holds $q_{n, m+1} \circ s'_{i} =  \tau_{P^n}^r$ for each $i$. Thus, $\CC_{n,m+1}^{\Sigma, r}(P) \leq k$.

\end{proof}

\begin{remark} Since for any $m$ the value $\CC_{n,m}^{\Sigma, r}(P)\geq 1$, so by Lemma \ref{lemma ccm} the value is stable for large $m$. We denote the stable value by $\ccnsr(P)$. 
\end{remark}

For $r \geq 0$, let $\rho_j : \sd^r(P^n) \rightarrow P$ denote the composition of $\tau^r_{P^n} : \sd^r(P^n) \rightarrow P^n$ and the $j$-th projection  $p_j:P^n \to P$ for $j = 1, 2,\cdots,n$. We have the following lemma which gives an alternative formulation of the definition for $\ccnsr(P)$.

\begin{lemma}\label{lemma cc homotopy}
With notations as above, $\ccnsr(P)$ is the minimal number $k$ such that there exist a $\Sigma_n$-invariant open cover $\{Q_i\}_{i = 1}^k$ of $\sd^r(P^n)$ and the maps $\rho_1, \rho_2,\cdots,\rho_n : Q_i \rightarrow P$ are symmetrically homotopic.
\end{lemma}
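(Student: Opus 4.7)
The plan is to mirror the argument of Lemma \ref{lemma homotopy} (the topological version) in the combinatorial setting, using the exponential correspondence for order-preserving maps into finite mapping spaces. Specifically, a $\Sigma_n$-section $s_i : Q_i \to P^{J_{n,m}}$ of $q_{n,m}$ satisfying $q_{n,m} \circ s_i = \tau_{P^n}^r$ should correspond bijectively with a symmetric combinatorial homotopy $H : Q_i \times J_{n,m} \to P$ between $\rho_1, \ldots, \rho_n$, and then Lemma \ref{lemma comb homotopy} will translate this into the symmetric homotopy formulation of the statement.

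First I would set up the correspondence $s \leftrightarrow H$ via $H(x, t_j) = s(x)(t_j)$ and unpack the compatibility condition $q_{n,m} \circ s_i = \tau_{P^n}^r$. Since $q_{n,m}(\gamma) = (\gamma(m_1), \ldots, \gamma(m_n))$, this compatibility is equivalent to $s_i(x)(m_j) = p_j(\tau_{P^n}^r(x)) = \rho_j(x)$ for each $1 \leq j \leq n$, which under the correspondence reads $H(x, m_j) = \rho_j(x)$; that is, $H$ is a combinatorial homotopy between $\rho_1, \ldots, \rho_n$.

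Next I would match the symmetry conditions. Recall that the $\Sigma_n$-action on $P^{J_{n,m}}$ is given by $(g\gamma)(t_j) = \gamma(t_{g(j)})$, so the $\Sigma_n$-equivariance of $s_i$ becomes $s_i(gx)(t_j) = s_i(x)(t_{g(j)})$, which under the exponential correspondence is exactly $H(gx, t_j) = H(x, t_{g(j)})$, the defining condition for $H$ to be a \emph{symmetric} combinatorial homotopy. Evaluating this at $t = m_j$ recovers $\rho_j(gx) = \rho_{g(j)}(x)$, which is automatic from the $\Sigma_n$-equivariance of $\tau_{P^n}^r$ and the intertwining $p_j \circ g = p_{g(j)}$. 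Lemma \ref{lemma comb homotopy} then identifies symmetric combinatorial homotopy with symmetric homotopy, giving the formulation in the statement.

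The passage from $\CC_{n,m}^{\Sigma, r}(P)$ at a fixed $m$ to the stable value $\ccnsr(P)$ requires only a small additional remark: if different pieces of the cover support combinatorial homotopies with different fence-lengths, the retraction $R : J_{n,m'} \to J_{n,m}$ used in Lemma \ref{lemma ccm} lets us enlarge $m$ to a common value without changing the count. I do not foresee a serious obstacle here; the only point that requires care is the bookkeeping of the two $\Sigma_n$-actions (the one on $J_{n,m}$ and the induced one on the mapping space $P^{J_{n,m}}$) and the observation that the section condition constrains $H$ at the top endpoints $m_j$ rather than at $0$, so the symmetry at $0$ emerges automatically from the symmetry of $H$ at the other endpoints combined with monotonicity.
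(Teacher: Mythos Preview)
Your proposal is correct and follows essentially the same argument as the paper: both use the exponential correspondence $s(x)(t_j)=H(x,t_j)$ to translate the section condition $q_{n,m}\circ s_i=\tau_{P^n}^r$ into $H(x,m_j)=\rho_j(x)$ and the $\Sigma_n$-equivariance of $s_i$ into the symmetric-homotopy condition $H(gx,t_j)=H(x,t_{g(j)})$. Your additional remarks invoking Lemma~\ref{lemma comb homotopy} and the retraction of Lemma~\ref{lemma ccm} to pass to a common fence length are careful touches that the paper leaves implicit.
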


\begin{proof}
The existence of maps $s : Q \to P^{J_{n,m}}$ and $H : Q \times J_{n, m} \to P$ are equivalent by exponential law. Let $ \bx = (x_1,  x_2, \cdots, x_n)\in Q$. We set $s(\bx)(t_j) =  H(\bx, t_j),$ for $ t_j$ is parameter for $j$-th fence and $j\in \{1, 2, \cdots, n\}$. If $Q$ is an $\Sigma_n$-invariant set then $s$ is an $\Sigma_n$-map if and only if $H$ satisfy the relation $H(g\bx, t_j) = H(\bx, t_{g(j)})$.
Also we have,
\begin{align*}
q_{n, m} \circ s_{i} =  \tau_{P^n}^k & \Leftrightarrow [q_{n, m} \circ s_{i}(\bx)](j) =  [\tau_{P^n}^k(\bx)](j) \\ &\Leftrightarrow s(\bx)(m_j) = \rho_j(\bx) \\
&\Leftrightarrow H(\bx, m_j) = \rho_j(\bx) \\ & \Leftrightarrow H \text{  is a homotopy between } \rho_1, \rho_2,\cdots,\rho_n.
\end{align*}
Hence the Lemma follows.
\end{proof}
The value $\ccnsr(P)$ depends on $r$. If we increase $r$ the value $\ccnsr(P)$ will decrease. We have the following Proposition for $r=0$. This is a symmetric version of \cite[Theorem 4.9]{Pau19}.

\begin{prop}
For any finite space $P$,  we have $\CC_n^{\Sigma, 0}(P) = \TC_n^{\Sigma}(P)$, $n\geq 2$. 
\end{prop}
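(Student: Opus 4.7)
The plan is to reduce both sides to the same combinatorial characterisation of an $\Sigma_n$-invariant open cover of $P^n$, and then invoke Lemma \ref{lemma comb homotopy} to identify the two flavours of symmetric homotopy involved. For the left-hand side, I would specialise Lemma \ref{lemma cc homotopy} to $r=0$. Since $\tau_{P^n}^0=\Id_{P^n}$, the auxiliary maps $\rho_j=p_j\circ\tau_{P^n}^0$ collapse to the projections $p_j:P^n\to P$. Thus $\CC_n^{\Sigma,0}(P)$ is the minimum $k$ such that $P^n$ admits a $\Sigma_n$-invariant open cover $\{Q_i\}_{i=1}^k$ on which $p_1,\dots,p_n$ are symmetrically \emph{combinatorially} homotopic.

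For the right-hand side, Remark \ref{remark sym tc definition} already presents $\tcns(P)$ as the minimum $k$ such that $P^n$ admits a $\Sigma_n$-invariant open cover $\{U_i\}_{i=1}^k$ on which $p_1,\dots,p_n$ are symmetrically (topologically) homotopic. So the two quantities are computed by the same optimisation over $\Sigma_n$-invariant open covers of $P^n$, differing only in whether we ask the projections restricted to each cover piece to be symmetrically topologically homotopic or symmetrically combinatorially homotopic.

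To finish, I would invoke Lemma \ref{lemma comb homotopy}. Each $Q_i$ (respectively $U_i$) is a $\Sigma_n$-invariant open subspace of the finite $T_0$ space $P^n$, hence itself a finite $\Sigma_n$-space, and $P$ is a finite space. Lemma \ref{lemma comb homotopy} therefore applies with domain $Q_i$ (or $U_i$) and codomain $P$ and asserts that a symmetric combinatorial homotopy between the restricted projections exists if and only if a symmetric topological homotopy between them does. Feeding any admissible cover for one characterisation into the other yields both inequalities $\CC_n^{\Sigma,0}(P)\le\tcns(P)$ and $\tcns(P)\le\CC_n^{\Sigma,0}(P)$ simultaneously.

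The only mildly delicate point is checking that the $\Sigma_n$-equivariance built into Lemma \ref{lemma comb homotopy} survives the restriction from $P^n$ to an arbitrary $\Sigma_n$-invariant open subset; this is routine, since the comparison maps $h:J_{n,m}\to I_n$ and $h':I_n\to J_{n,m}$ used in the proof of that lemma are already $\Sigma_n$-equivariant by construction and the restriction of a $\Sigma_n$-invariant open set to its subspace structure preserves the $\Sigma_n$-action. No subdivision or Lebesgue-number argument is needed here, in contrast with the situation for $r\ge 1$; that is precisely why the case $r=0$ collapses so cleanly.
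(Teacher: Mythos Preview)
Your proposal is correct and follows essentially the same approach as the paper: specialise Lemma \ref{lemma cc homotopy} to $r=0$ so that $\rho_j=p_j$, invoke Remark \ref{remark sym tc definition} for the topological side, and use Lemma \ref{lemma comb homotopy} to identify symmetric combinatorial homotopy with symmetric topological homotopy on each piece of the cover. The paper's own proof is a one-sentence compression of exactly these three ingredients; your write-up simply unpacks them more carefully (and makes explicit the distinction between the two notions of symmetric homotopy that the paper, after Lemma \ref{lemma comb homotopy}, deliberately conflates).
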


\begin{proof}
	
Using Lemma \ref{lemma cc homotopy} we get, $\CC_n^{\Sigma, 0}(P)$ is the minimal integer $k$ such that there exist a $\Sigma_n$-invariant open cover $\{Q_i\}_{i = 1}^k$ of $P^n$ and the projection maps $\rho_1, \rho_2,\cdots,\rho_n : Q_i \rightarrow P$ are symmetrically homotopic if and only if $\TC_n^{\Sigma}(P) = k$ (using Lemma \ref{lemma comb homotopy} and Remark \ref{remark sym tc definition}).

\end{proof}

\begin{lemma}\label{cc geq sc}
Let $P$ be a finite space, and $r\geq0$ and $n\geq 2$. Then we have,
\begin{enumerate}[(i)]
\item $\CC_{n}^{\Sigma, r}(P) \geq \SC_{n}^{\Sigma}(\KK(P))$,
\item $\CC_{n}^{\Sigma, r}(P) \geq \CC_{n}^{\Sigma, r+1}(P)$.
\end{enumerate}
\end{lemma}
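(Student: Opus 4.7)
My plan is to handle the two inequalities by rather different routes. For part (i), I will transfer a minimal $\CC_{n}^{\Sigma,r}$-cover of the finite space $\sd^r(P^n)$ to a symmetric subcomplex cover of $\sd^r(\KK(P)^n)$, using the canonical identifications $\KK(P^n) = \KK(P)^n$ (from Definition \ref{dscprod}) and $\KK(\sd^r(P^n)) = \sd^r(\KK(P)^n)$. The key structural observation is that in the Alexandrov topology on a finite poset the open sets are precisely the down-sets, which makes the translation between open covers (combinatorial side) and subcomplex covers (simplicial side) well-behaved.

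Concretely, suppose $\CC_{n}^{\Sigma,r}(P) = k$ and let $\{Q_i\}_{i=1}^k$ be a $\Sigma_n$-invariant open cover of $\sd^r(P^n)$ witnessing this. I will set $L_i := \KK(Q_i)$, regarded as a subcomplex of $\sd^r(\KK(P)^n)$ with vertex set $Q_i$ and simplices the chains of $\sd^r(P^n)$ contained in $Q_i$. Symmetry of $L_i$ is immediate from the $\Sigma_n$-invariance of $Q_i$, and $\{L_i\}$ covers $\sd^r(\KK(P)^n)$ because any chain $\sigma_0 \leq \cdots \leq \sigma_q$ has its maximum $\sigma_q$ in some $Q_i$ and, since $Q_i$ is a down-set, the whole chain lies in $Q_i$. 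By Lemma \ref{lemma cc homotopy} the projection maps $\rho_1,\ldots,\rho_n: Q_i \to P$ are symmetrically homotopic, so Proposition \ref{homotopy to contiguous in face poset} yields that $\KK(\rho_1),\ldots,\KK(\rho_n): L_i \to \KK(P)$ are symmetrically contiguous. It remains to identify $\KK(\rho_j)$ with the restriction of $\pi_j = p_j \circ \iota_{\KK(P)^n}^r$ used in the definition of $\SC_{n}^{\Sigma,r}(\KK(P))$; this is immediate because $\KK(\tau_{P^n})$ is itself a $\Sigma_n$-simplicial approximation of the identity (as noted in Section 4) and $\KK$ commutes with coordinate projections. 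Taking the minimum over $r$ on the left then gives $\SC_{n}^{\Sigma}(\KK(P)) \leq \CC_{n}^{\Sigma,r}(P)$.

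For part (ii), the plan is a direct pullback. Suppose $\CC_{n}^{\Sigma,r}(P) = k$, realized for some large enough $m$ by a $\Sigma_n$-invariant open cover $\{Q_i\}$ and $\Sigma_n$-equivariant sections $s_i: Q_i \to P^{J_{n,m}}$ with $q_{n,m} \circ s_i = \tau^r_{P^n}$. I put $Q_i' := \tau_{\sd^r(P^n)}^{-1}(Q_i) \subseteq \sd^{r+1}(P^n)$ and $s_i' := s_i \circ \tau_{\sd^r(P^n)}|_{Q_i'}$. Continuity and $\Sigma_n$-equivariance of $\tau_{\sd^r(P^n)}$ ensure that $\{Q_i'\}$ is a $\Sigma_n$-invariant open cover and each $s_i'$ is a $\Sigma_n$-map, and a short computation gives $q_{n,m} \circ s_i' = \tau^{r+1}_{P^n}|_{Q_i'}$, so $\CC_{n}^{\Sigma,r+1}(P) \leq k$. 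I expect the only genuinely nontrivial step in the whole argument to be the poset-to-complex translation in part (i): confirming that the down-set structure of open sets in $\sd^r(P^n)$ produces a bona fide symmetric subcomplex cover via the chain-maximum argument, and keeping the identifications $\KK(P^n) = \KK(P)^n$ and $\KK(\rho_j) = \pi_j|_{L_i}$ carefully aligned with the data used in the definition of $\SC_{n}^{\Sigma,r}(\KK(P))$. Part (ii) is essentially a formal pullback.
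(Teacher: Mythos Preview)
Your proposal is correct and follows essentially the same route as the paper: for (i) you pass from the $\Sigma_n$-invariant open cover $\{Q_i\}$ of $\sd^r(P^n)$ to the symmetric subcomplex cover $\{\KK(Q_i)\}$ of $\sd^r(\KK(P^n))$, invoke Lemma~\ref{lemma cc homotopy} and Proposition~\ref{homotopy to contiguous in face poset}, and identify $\KK(\rho_j)=\pi_j$ via $\KK(\tau_{P^n})=\iota_{\KK(P^n)}$; for (ii) you pull back along $\tau_{\sd^r(P^n)}$ exactly as the paper does. Your down-set/chain-maximum justification for why $\{\KK(Q_i)\}$ covers is a welcome explicit detail that the paper leaves implicit.
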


\begin{proof}

(i) Assume that $\CC_{n}^{\Sigma, r}(P)  = k$. By Lemma \ref{lemma cc homotopy} there exists a symmetric open cover $\{Q_i\}_{i=1}^{r}$ of $\sd^r(P^n)$ such that $\rho_1, \rho_2,\cdots,\rho_n : Q_i \rightarrow P$ are symmetrically homotopic. By Proposition \ref{homotopy to contiguous in face poset} we can say that the maps 
$\mathcal{K}(\rho_1), \mathcal{K}(\rho_2),\cdots,\mathcal{K}(\rho_n) : \mathcal{K}(Q_i) \rightarrow \mathcal{K}(P)$
are symmetrically contiguous. Since $Q_i$ is symmetric open set implies the subcomplex $\mathcal{K}(Q_i)$ is symmetric and $\{\mathcal{K}(Q_i)\}_{i=1}^k$ is a cover of $\mathcal{K}(\sd^r(P^n)) = \sd^r(\mathcal{K}(P^n))$. Also $\mathcal{K}(\rho_j) = \mathcal{K}(p_j \circ \tau^r_{P^n}) = \KK(p_j)\circ \iota^r_{\KK(P^n)}= \pi_j$, for $j = 1, 2,\cdots,n$ and by Lemma \ref{lemma approx of identity} $\iota^r_{\KK(P^n)}$ is $\Sigma_n$-approximation of identity. So, $\scnsr(\KK(P)) \leq k$ and therefore $\SC_{n}^{\Sigma}(\KK(P)) \leq k$.
	
(ii)  Let $\CC_{n}^{\Sigma, r}(P) = k$. Then we have a $\Sigma_n$-invariant open cover $\{Q_i\}_{i=1}^{k}$ of $\sd^{r}(P^n)$ such that on each $Q_i$ there is a $\Sigma_n$-map $s_i : Q_i \to P^{J_{n, m}}$ satisfying $q_{n, m} \circ s_{i} =  \tau_{P^n}^r$ for some $m \geq 0$. Let us take the open cover $\{U_i\}_{i=1}^{k}$ of $\sd^{r+1}(P^n)$, where $U_i = \tau^{-1}_{\sd^{r}(P^n)}(Q_i)$ and set $s'_i = s_i \circ \tau_{\sd^{r}(P^n)} : U_i \to P^{J_{n, m}}$ such that the following diagram commutes:
$$
\xymatrix{
U_i \ar@{^{(}->}[dd] \ar[rr]^{\tau_{\sd^{r}(P^n)}} \ar@/^{2.5pc}/@{..>}[rrrr]\sp{s'_i}  &&  Q_i \ar[rr]^{s_i} \ar@{^{(}->}[dd]  &&  P^{J_{n, m}}  \ar[dd]^{q_{n, m}} \\\\
\sd^{r+1}(P^n)  \ar[rr]_{\tau_{\sd^{r}(P^n)}} && \sd^{r}(P^n) \ar[rr]_{\tau^r_{P^n}} && P^n}
$$
Here $Q_i$ is $\Sigma_n$-invariant implies $U_i$ so and $s_i$ is a $\Sigma_n$-map implies $s'_i$ also. So we get a $\Sigma_n$-invariant open cover $\{U_i\}_{i=1}^{k}$ of $\sd^{r+1}(P^n)$  and a $\Sigma_n$-map $s'_i$ on each $U_i$ such that $q_{n, m} \circ s'_{i} =  \tau_{P^n}^{r+1}$. Therefore $\CC_{n}^{\Sigma, r+1}(P) \leq k$.

\end{proof}

\noindent We get $\{\CC_{n}^{\Sigma, r}(P)\}_r$ is a decreasing sequence bounded below by $1$. So we make the following definition.
\begin{definition} 
For a finite space $P$, the \emph{$n$-th symmetric combinatorial complexity} $\ccns(P)$ of $P$ is defined as: 
 $$\ccns(P) = \min_{r \geq 0} \{\ccnsr(P)\}. $$
\end{definition}
Let us now prove our main theorem. This can be viewed as a symmetric version of \cite[Theorem 5.6]{Pau19} and higher analogue of \cite[Theorem 3.15]{Tan19}.
\begin{theorem}\label{thm sym cc sym tc equal}
For a finite space $P$, $\ccns(P) = \scns(\KK(P)) = \tcns(|\KK(P)|)$, for any $n\geq2$.
\end{theorem}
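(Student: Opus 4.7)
The plan is to prove the theorem as three separate pieces. The first equality $\scns(\KK(P))=\tcns(|\KK(P)|)$ is an immediate application of Theorem A (Theorem \ref{thm sym sc sym tc equal}) to the finite simplicial complex $K=\KK(P)$. For the equality $\ccns(P)=\scns(\KK(P))$, one direction is already in hand: Lemma \ref{cc geq sc}(i) gives $\CC^{\Sigma,r}_n(P)\geq \scns(\KK(P))$ for every $r\geq 0$, and taking the minimum over $r$ on the left yields $\ccns(P)\geq \scns(\KK(P))$. So the only new content is the reverse inequality $\ccns(P)\leq \scns(\KK(P))$.

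For this reverse inequality, I would start from data witnessing $\scns(\KK(P))=k$: an integer $r$ and symmetric subcomplexes $\{L_i\}_{i=1}^{k}$ covering $\sd^r(\KK(P^n))$ on each of which the simplicial projections $\pi_j:L_i\to \KK(P)$ are symmetrically contiguous. The strategy is to transport this data to the finite-space side via the face-poset functor $\XX$. Using the natural identification $\XX(\KK(Q))=\sd(Q)$ for a finite space $Q$, together with the compatibility $\XX\circ\sd=\sd\circ\XX$ between the two flavors of barycentric subdivision, one obtains $\XX(\sd^r(\KK(P^n)))=\sd^{r+1}(P^n)$. Each $\XX(L_i)$ is then a $\Sigma_n$-invariant open subposet of $\sd^{r+1}(P^n)$ (openness by Remark \ref{face poset of a subcomplex open}, $\Sigma_n$-invariance because $L_i$ is symmetric), and together the $\XX(L_i)$ cover $\sd^{r+1}(P^n)$. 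Next, Lemma \ref{contiguous to homotopy in face poset} applied to the symmetric contiguities on $L_i$ yields symmetrically combinatorially homotopic maps $\XX(\pi_j):\XX(L_i)\to \XX(\KK(P))=\sd(P)$, and post-composition with the $\Sigma_n$-equivariant ``max'' map $\tau_P:\sd(P)\to P$ preserves symmetric combinatorial homotopy.

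The remaining step is to identify $\tau_P\circ \XX(\pi_j)$ with the required projection $\rho_j|_{\XX(L_i)}$ up to symmetric combinatorial homotopy. A short unwinding of definitions gives $\tau_P\circ \XX(p_j)=p_j\circ \tau_{P^n}$ as maps $\sd(P^n)\to P$, so the task reduces to comparing $\XX(\iota^r_{\KK(P^n)})$ with $\tau^r_{\sd(P^n)}:\sd^{r+1}(P^n)\to \sd(P^n)$. Both are assembled from the same ``maximum of a chain'' operation, and at one level one checks that $\XX(\iota_{\KK(P^n)})(\sigma_0\subsetneq\cdots\subsetneq\sigma_q)=\{\max\sigma_0,\ldots,\max\sigma_q\}$ is contained in $\sigma_q=\tau_{\sd(P^n)}(\sigma_0\subsetneq\cdots\subsetneq\sigma_q)$, giving a $\Sigma_n$-equivariant combinatorial homotopy of length one between them. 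Iterating this argument in $r$ produces a $\Sigma_n$-equivariant combinatorial homotopy between $\tau_P\circ \XX(\pi_j)$ and $\rho_j|_{\XX(L_i)}$, and concatenating with the earlier symmetric combinatorial homotopy between the $\tau_P\circ \XX(\pi_j)$'s shows that the $\rho_j|_{\XX(L_i)}$'s are themselves symmetrically combinatorially homotopic. By Lemma \ref{lemma cc homotopy}, this proves $\CC^{\Sigma,r+1}_n(P)\leq k$, and hence $\ccns(P)\leq k$. The main obstacle is precisely this last identification step, since the $\Sigma_n$-equivariance of the intermediate combinatorial homotopies must be tracked carefully through each iteration of the subdivision, especially given that the choice of $\iota^r_{\KK(P^n)}$ involves making $\Sigma_n$-equivariant selections at every stage.
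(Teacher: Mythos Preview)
Your proposal is correct and follows the same overall architecture as the paper's proof: one inequality from Lemma~\ref{cc geq sc}(i), the other by pushing the $\scns$ data through the face-poset functor $\XX$, invoking Lemma~\ref{contiguous to homotopy in face poset}, and postcomposing with $\tau_P$.

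The only difference is in the ``remaining step'' you flag as the main obstacle. You compare $\XX(\iota^r_{\KK(P^n)})$ with $\tau^r_{\sd(P^n)}$ only up to a $\Sigma_n$-equivariant combinatorial homotopy built by iterating the containment $\{\max\sigma_0,\ldots,\max\sigma_q\}\subseteq\sigma_q$, and then concatenate. The paper avoids this entirely by exploiting the naturality of the last-element map: for any order-preserving $f:R\to Q$ one has $\tau_Q\circ\sd(f)=f\circ\tau_R$. With the canonical choice $\iota^r_{\KK(P^n)}=\KK(\tau^r_{P^n})$ (legitimate since $\scnsr$ is independent of the choice of $\Sigma_n$-approximation), one gets $\XX(\pi_j)=\sd(p_j\circ\tau^r_{P^n})$, and then
\[
\tau_P\circ\XX(\pi_j)=\tau_P\circ\sd(p_j)\circ\sd(\tau^r_{P^n})=p_j\circ\tau_{P^n}\circ\sd(\tau^r_{P^n})=p_j\circ\tau^r_{P^n}\circ\tau_{\sd^r(P^n)}=p_j\circ\tau^{r+1}_{P^n}=\rho_j
\]
as an \emph{exact equality} on $\XX(L_i)\subset\sd^{r+1}(P^n)$. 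So the obstacle you identify, together with the equivariance bookkeeping through iterated subdivisions, simply does not arise; the symmetric combinatorial homotopy among the $\rho_j$'s is exactly the one already produced by Lemma~\ref{contiguous to homotopy in face poset}.
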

\begin{proof}
In (i) of Lemma \ref{cc geq sc}, if we take $r \to \infty$, we get $\ccns(P) \geq \scns(\KK(P))$. Let us prove $\ccns(P) \leq \scns(\KK(P))$. Let $\scns(\KK(P))=k$ and $\{L_i\}_{i=1}^k$ be a collection of symmetric subcomplexes covering $\sd^r(\mathcal{K}(P^n))$ such that the restrictions $\pi_1, \pi_2,\cdots,\pi_n : L_i \rightarrow \mathcal{K}(P)$ are symmetrically contiguous, for some $r\geq0$ and for each $i$. By Lemma \ref{contiguous to homotopy in face poset} the maps $\mathcal{X}(\pi_1), \mathcal{X}(\pi_2),\cdots,\mathcal{X}(\pi_n) : \mathcal{X}(L_i) \rightarrow \mathcal{X}(\mathcal{K}(P)) = \sd(P)$ are symmetrically homotopic. So the composition maps $\tau_P\circ\mathcal{X}(\pi_1), \tau_P\circ\mathcal{X}(\pi_2), \cdots, \tau_P\circ\mathcal{X}(\pi_n) : \mathcal{X}(L_i) \rightarrow P$ are also symmetrically homotopic. Using Remark \ref{face poset of a subcomplex open}, since $L_i$ is a subcomplex of $\sd^r(\mathcal{K}(P^n)),$ so $\XX(L_i)$ is open in $\mathcal{X}(\sd^r(\mathcal{K}(P^n))) = \sd^{r+1}(P^n)$, for each $i$. Now $\{L_i\}_{i=1}^k$ is a collection of symmetric subcomplexes covering $\sd^r(\mathcal{K}(P^n))$ implies $\{\mathcal{X}(L_i)\}_{i=1}^k$ is a symmetric open cover of $\sd^{r+1}(P^n)$. Also for any $j\in\{1, 2, \cdots, n\}$ we have,
\begin{align*}
\tau_P \circ \mathcal{X}(\pi_j) &= \tau_P \circ \mathcal{X}(\mathcal{K}(p_j \circ \tau^k_{P^n}))  = \tau_P \circ \sd(p_j \circ \tau^k_{P^n})\\
&=  \tau_P \circ \sd(p_j) \circ \sd(\tau^k_{P^n})  = p_j \circ \tau_{P^n}^{k+1}  = \rho_j,
\end{align*}
where $p_j:P^n \to P$ is the $j$-th projection. Hence $\ccns(P) \leq k = \scns(\KK(P))$ and using Theorem \ref{thm sym sc sym tc equal} we have $\ccns(P) = \scns(\KK(P)) = \tcns(|\KK(P)|)$, for any $n\geq2$.
\end{proof}
\end{mysubsection}
\end{section}

\end{document}